\newcommand{\Z}{{\mathbb Z}}
\newcommand{\N}{{\mathbb N}}
\newcommand{\mR}{{\mathcal R}}
\DeclareMathOperator{\supp}{supp}
\newtheorem{question}{Question}
\newtheorem{thm}{Theorem}
\newtheorem{conj}{Conjecture}
\newtheorem{lemma}{Lemma}
\newtheorem{prop}{Proposition}
\newtheorem{cor}{Corollary}
\theoremstyle{definition}
\theoremstyle{remark}
\title[The Realization Problem for Delta Sets]{The Realization Problem for Delta Sets of Numerical Semigroups}
\author{Stefan Colton}
\address{Department of Mathematics, Yale University, New Haven, CT 06511}
\email{stefan.colton@yale.edu}
\author{Nathan Kaplan}
\address{Department of Mathematics, University of California, Irvine, CA 92697} 
\email{nckaplan@math.uci.edu}
\date{May, 1 2016}
\keywords{Numerical Semigroup, Delta Set, Factorization Theory, Non-Unique Factorization}
\subjclass{20M13, 20M14, 11B75}
\begin{document}

\maketitle

\begin{abstract} 
The delta set of a numerical semigroup $S$, denoted $\Delta(S)$, is a factorization invariant that measures the complexity of the sets of lengths of elements in $S$.  We study the following problem: Which finite sets occur as the delta set of a numerical semigroup $S$?  It is known that $\min \Delta(S) = \gcd \Delta(S)$ is a necessary condition.  For any two-element set $\{d,td\}$ we produce a semigroup $S$ with this delta set.  We then show that for $t\ge 2$, the set $\{d,td\}$ occurs as the delta set of some numerical semigroup of embedding dimension three if and only if $t=2$.
\end{abstract}

\section{Introduction}

There are a number of invariants that have been used to study the failure of unique factorization in commutative cancellative monoids.  Non-unique factorization in these monoids has received quite a bit of attention in the recent literature, for example see \cite{GHK} and the extensive list of references therein.  Numerical semigroups give a particularly concrete setting in which to study these factorization problems.  One motivation for studying the factorization theory of numerical semigroups comes from their associated numerical semigroup rings.  These rings often give concrete instances of more general problems in commutative algebra \cite{GSL}.

There are several factorization invariants of numerical semigroups and related commutative monoids that have been studied extensively in the recent literature, for example the maximal denumerant \cite{BHJ}, the catenary and tame degree \cite{BCRSS, CGL, Omi}, and the $\omega$-invariant \cite{ACKT, GMV2}.  In this paper we focus on another invariant, the delta set \cite{BCS, BCLMS, BCKR,CDHK, CGP, CHK, Chap, GMV, GSLM}. This set measures the complexity of sets of factorization lengths for elements of the semigroup.  The goal of studying these invariants is to understand when two semigroups have similar factorization behavior.  One idea behind the delta set is that in semigroups with similar factorization behavior the structure of the sets of lengths should be similar.  

Much effort has gone into computing invariants for certain classes of semigroups.  The following related question has received relatively less attention.  Given a value for a factorization invariant, does there exist a numerical semigroup realizing it?  We focus on a particular question that we refer to as the \emph{realization problem for delta sets of numerical semigroups}.
\begin{question}
\begin{enumerate}
\item Which finite sets $T$ occur as $\Delta(S)$ for some numerical semigroup $S$?
\item Given an integer $e \ge 2$, which finite sets $T$ occur as $\Delta(S)$ for some numerical semigroup $S$ with embedding dimension $e$?
\end{enumerate}
\end{question}
In this paper we show that any set $T = \{d,td\}$ with $d \ge 1$ and $t\ge 2$ has a positive answer to the first question by explicitly producing a semigroup $S$ with $\Delta(S) = \{d,td\}$.  Factorizations in semigroups with embedding dimension two are easy to understand, but several problems remain unsolved in the embedding dimension three case.  We show that if $\{d,td\}$ with $t \ge 2$ has a positive answer to the second question when $e = 3$, then $t = 2$.

We also carefully study the minimal presentations of the classes of semigroups proving the results stated above.  The minimal presentation is a  set of generators of a monoid associated to the semigroup that describes all possible ways of moving between factorizations of the same element.  These presentations are extremely useful in understanding factorization properties and have been thoroughly investigated \cite{GSO, Phil1, Phil2}.

\subsection{Background}

We recall that a \emph{numerical semigroup} is an additive submonoid of $\N = \{0,1,2,\ldots\}$ with finite complement.  Every numerical semigroup $S$ has a unique \emph{minimal generating set}, that is, there exists a set of minimum cardinality $\{n_1,\ldots, n_e\}$ of distinct positive integers such that 
\[ 
S = \{a_1 n_1 + \cdots + a_e n_e\ | a_i \in \N\}.
\]
The number of elements of a minimal generating set is called the \emph{embedding dimension} of $S$ and is usually denoted by $e$.  We write $S = \langle n_1, \ldots, n_e\rangle$ if $S$ has minimal generating set $\{n_1,\ldots, n_e\}$.

The \emph{factorization homomorphism} $\varphi \colon \N^e \rightarrow S$ is defined by 
\[
\varphi(a_1,\ldots, a_e) = a_1 n_1 + \cdots + a_e n_e.
\]
If $\varphi(a_1,\ldots, a_e) = x$ then we say that $(a_1,\ldots, a_e)$ is a \emph{factorization} of $x$.  The \emph{length} of this factorization is defined as $a_1 + \cdots + a_e$.  The set of factorizations of $x$ is $\varphi^{-1}(x)$, which is clearly finite.  Let $\mathcal{L}(x)$ denote the corresponding set of factorization lengths.

Suppose $\mathcal{L}(x) = \{\ell_1 < \ell_2 < \cdots < \ell_m\}$.  The \emph{delta set} of $x$ is the set of differences of consecutive elements in this list, 
\[
\Delta(x) = \{\ell_{i+1} - \ell_i\ |\ i\in [1,m-1]\}.
\]  
The delta set of $S$ is defined as $\Delta(S) = \bigcup_{x\in S} \Delta(x)$.  This set gives a measure of how far $S$ is from being a unique factorization domain.  In a unique factorization domain, each element of the domain has exactly one factorization.  In a half-factorial domain, factorizations are not unique but every factorization has the same length, so the delta set of each element is empty.  The delta set of $S$ consists of a single element $d$ if and only if at least one element has at least two factorization lengths and the set of lengths of every element is an arithmetic progression with common difference $d$.

We give an overview of previous results on delta sets.  It is known that $\Delta(S)$ is finite.  An explicit finite set that determines $\Delta(S)$ is given by the following result.
\begin{thm}[Corollary 3 in \cite{CHK}]
Let $S = \langle n_1,\ldots, n_e\rangle$ and $N = 2 e n_2 n_e^2 + n_1 n_e$.  Then 
\[
\Delta(S) = \bigcup_{x \in S \atop x \le N} \Delta(x).
\]
\end{thm}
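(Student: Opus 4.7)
The inclusion $\bigcup_{x \le N} \Delta(x) \subseteq \Delta(S)$ is immediate from the definition $\Delta(S) = \bigcup_{x \in S}\Delta(x)$. For the reverse inclusion, I would argue by minimal counterexample: given $d \in \Delta(S)$, let $x$ be the smallest element of $S$ with $d \in \Delta(x)$, and aim to show $x \le N$.

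The key tool is a reduction lemma. Suppose $\mathbf{a}$ and $\mathbf{b}$ are factorizations of $x$ realizing consecutive lengths $\ell$ and $\ell+d$ in $\mathcal{L}(x)$, and suppose some index $i$ satisfies $a_i \ge 1$ and $b_i \ge 1$. Then $\mathbf{a}-e_i$ and $\mathbf{b}-e_i$ are factorizations of $x-n_i$ of lengths $\ell-1$ and $\ell+d-1$, and these lengths are in fact \emph{consecutive} in $\mathcal{L}(x-n_i)$: any intermediate length would, after adding $e_i$, lift to an intermediate length in $\mathcal{L}(x)$, contradicting consecutiveness at $x$. Hence $d \in \Delta(x-n_i)$, contradicting minimality. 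Consequently, for every witness pair $(\mathbf{a},\mathbf{b})$ of $d$ at $x$, the supports must be disjoint, i.e.\ $a_i b_i = 0$ for every $i \in \{1,\ldots,e\}$.

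Next I would exploit this disjointness together with the basic trade relations $n_1 \cdot n_j = n_j \cdot n_1$ (for $j=2,\ldots,e$) to bound the coordinates of $\mathbf{a}$ and $\mathbf{b}$. If some coordinate $a_j$ or $b_j$ is too large, one can perform a trade to swap $n_1$ copies of $n_j$ for $n_j$ copies of $n_1$, producing an alternative factorization of $x$ of a controlled length; pairing this new factorization with the other witness creates a witness pair whose supports overlap, contradicting the disjointness established above. Careful bookkeeping yields a uniform bound on each $a_i$ and $b_i$ in terms of $n_1,\ldots,n_e$, and since $x = \sum a_i n_i$ has at most $e$ nonzero terms each of size at most $n_e$, one extracts the quantitative bound $x \le N$.

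The main obstacle is matching the precise constant $N = 2en_2 n_e^2 + n_1 n_e$. The summand $n_1 n_e$ reflects the size of the fundamental trade $n_1 n_e = n_e n_1$, while the factor $2en_2 n_e^2$ must come from a finer analysis tracking exactly which generators lie in $\supp(\mathbf{a})$ versus $\supp(\mathbf{b})$, whether $1$ is in one of these supports, and which trades can be forced before disjointness breaks. The appearance of $n_2$ and of the factor $e$ suggests the argument proceeds by iterating over generators and distinguishes the smallest non-leading generator $n_2$ as a special case. This case analysis, together with verifying that each forced trade genuinely produces a new witness pair overlapping in support, is where I expect the technical difficulty to lie.
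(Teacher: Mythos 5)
Your first half is sound: the easy inclusion is indeed immediate, and your reduction lemma is correct --- if $\mathbf{a},\mathbf{b}$ realize consecutive lengths $\ell,\ell+d$ of $\mathcal{L}(x)$ and $a_i,b_i\ge 1$ for a common $i$, then $\ell-1,\ell+d-1$ are consecutive in $\mathcal{L}(x-n_i)$, so at a minimal witness $x$ every such pair has disjoint supports. The gap is in the second half, and it is exactly where you predicted: the disjointness conclusion applies \emph{only} to pairs realizing consecutive lengths of $\mathcal{L}(x)$ differing by $d$, but a trade destroys that property. Concretely, if $a_j\ge n_1$ and you pass to $\mathbf{a}'=\mathbf{a}-n_1e_j+n_je_1$, the new length is $\ell+(n_j-n_1)\neq\ell$, so the pair $(\mathbf{a}',\mathbf{b})$ need not realize consecutive lengths with gap $d$, and its overlapping support contradicts nothing. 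The most this maneuver yields is an inequality: since the new length cannot land strictly between $\ell$ and $\ell+d$, one gets $n_j-n_1\ge d$ whenever $a_j\ge n_1$ --- a constraint, not a coordinate bound. Disjointness of supports at the minimal witness by itself places no upper bound on $x$, so the entire quantitative content of the theorem, including the specific constant $N=2en_2n_e^2+n_1n_e$, remains unproved.

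Note also that the paper you are working from does not prove this statement; it imports it from the reference [CHK], whose proof takes a genuinely different route. There one shows that the function $x\mapsto\Delta(x)$ is \emph{eventually periodic}: $\Delta(x+n_1n_e)=\Delta(x)$ for all $x>2en_2n_e^2$. The corollary then follows by repeatedly descending in steps of $n_1n_e$, which is precisely why the constant has the form ``threshold plus one period,'' $N=2en_2n_e^2+n_1n_e$. The descent step there is by $n_1n_e$ (trading $n_e$ copies of $n_1$ against $n_1$ copies of $n_e$), not by a single generator $n_i$; your single-generator reduction is the easy part of such an argument, and the real work lies in comparing $\mathcal{L}(x)$ with $\mathcal{L}(x-n_1n_e)$ when one factorization in a witness pair is rich in copies of $n_1$ and the other is rich in copies of $n_e$, which is where the hypothesis that $x$ is large is actually used. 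To complete your proposal you would need to replace the trade-to-overlap mechanism with an argument of this kind.
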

This result shows how to determine $\Delta(S)$ in finite time.  It has subsequently been refined in Corollary 19 of \cite{GMV}. Many algorithms related to numerical semigroups have been implemented in the \texttt{numericalsgps} package for the computer algebra system GAP \cite{GAP}, and recently improvements have been suggested \cite{BOP, GMV}.   We have used data from this package extensively throughout this project.

In order for a finite set $T$ to occur as $\Delta(S)$ for a numerical semigroup $S$ the following necessary condition must be satisfied.
\begin{prop}[Proposition 1.4.4 in \cite{GHK}]\label{Geroldinger}
Let $S$ be a numerical semigroup.  Then $\min \Delta(S) = \gcd \Delta(S)$.
\end{prop}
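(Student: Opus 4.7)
The plan is to prove the identity via the divisibility claim that $d := \min \Delta(S)$ divides every element of $\Delta(S)$. The bound $\gcd \Delta(S) \le d$ is automatic since the gcd of a finite set of positive integers divides its minimum, so once the divisibility claim is established, equality follows. I will argue it by contradiction: assume some $e \in \Delta(S)$ satisfies $e = qd + r$ with $0 < r < d$, and then manufacture an element of $S$ whose length set has a gap strictly less than $d$, contradicting the minimality of $d$.

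The construction uses two witnesses. Since $d \in \Delta(S)$, pick $y \in S$ with consecutive lengths $L_1, L_1 + d \in \mathcal{L}(y)$; since $e \in \Delta(S)$, pick $x \in S$ with consecutive lengths $\ell, \ell + e \in \mathcal{L}(x)$. For any $k \ge 1$ and any $0 \le b \le k$, combining $b$ factorizations of $y$ of length $L_1 + d$ with $k-b$ factorizations of length $L_1$ yields a factorization of $ky$ of length $kL_1 + bd$, so $\mathcal{L}(ky) \supseteq \{kL_1 + jd : 0 \le j \le k\}$. Since $\mathcal{L}(a+b) \supseteq \mathcal{L}(a) + \mathcal{L}(b)$ for all $a, b \in S$, we obtain
\[
\mathcal{L}(x + ky) \supseteq \bigl\{\ell + kL_1 + jd : 0 \le j \le k\bigr\} \cup \bigl\{\ell + e + kL_1 + jd : 0 \le j \le k\bigr\}.
\]
These two arithmetic progressions, both with common difference $d$, lie in residue classes modulo $d$ that differ by $r$, since $e \equiv r \pmod d$. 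For $k$ large enough that the two integer intervals overlap in a window of length at least $d$, the sorted interleaving inside the overlap has consecutive differences alternating between $r$ and $d - r$, each lying in $\{1, \ldots, d - 1\}$.

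To conclude, I will invoke the elementary fact that whenever $A \subseteq B$ are finite sets of integers, every consecutive gap of $A$ is a sum of consecutive gaps of $B$; in particular the minimum consecutive gap of $B$ is at most the minimum consecutive gap of $A$. Applied with $B = \mathcal{L}(x + ky)$ and $A$ equal to the displayed subset, this produces a gap of $\mathcal{L}(x + ky)$ of size at most $\min(r, d - r) < d$. That gap is an element of $\Delta(x + ky) \subseteq \Delta(S)$, contradicting $d = \min \Delta(S)$ and forcing $r = 0$. The main obstacle I anticipate is precisely this final bookkeeping step: the small gap must be attributed to the full length set $\mathcal{L}(x + ky)$ rather than merely to the interleaved subset built by hand, so the subset/superset lemma must be stated and applied carefully. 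Once that is in place, the modular arithmetic and the choice of large $k$ are routine.
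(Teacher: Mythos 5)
Your proof is correct, but there is nothing in the paper to compare it against: the paper does not prove this proposition at all, it simply quotes it as Proposition 1.4.4 of the Geroldinger--Halter-Koch monograph \cite{GHK}. Judged on its own merits, your argument is sound and is essentially the standard one: writing $e = qd + r$ with $0 < r < d$, using $\mathcal{L}(a+b) \supseteq \mathcal{L}(a) + \mathcal{L}(b)$ to plant two step-$d$ arithmetic progressions (one shifted by $e$) inside $\mathcal{L}(x+ky)$, and noting that for $k \ge q+1$ they interleave, so your explicitly constructed subset has adjacent members at distances $r$ and $d-r$. The step you rightly flag as the delicate one --- that a consecutive gap of a subset $A \subseteq B$ is a sum of consecutive gaps of $B$, so $B = \mathcal{L}(x+ky)$ itself must have a gap of size at most $\min(r,d-r) < d$, which then lies in $\Delta(x+ky) \subseteq \Delta(S)$ --- is stated and applied correctly, and it does close the argument. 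Two cosmetic remarks: you tacitly assume $\Delta(S) \neq \emptyset$ (as does the cited proposition; the claim is vacuous otherwise), and you never actually need $\ell$ and $\ell+e$ to be \emph{consecutive} in $\mathcal{L}(x)$, only that both lengths occur. One could also economize by taking $k = q$ exactly and inspecting the single junction between the top of the first progression and the bottom of the second, but your large-$k$ overlap window proves the same thing with a little more room and no loss of rigor.
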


The following result gives an easy way to compute this minimum value in terms of a minimal generating set.
\begin{prop}[Proposition 2.9 in \cite{BCKR}]\label{MinD}
Let $S = \langle n_1,\ldots, n_e\rangle$.  Then 
\[
\min \Delta(S) = \gcd\{n_{i+1} - n_i\ |\ i \in [1,e-1]\}.
\]
\end{prop}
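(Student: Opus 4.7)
The plan is to set $d := \gcd\{n_{i+1}-n_i : i \in [1,e-1]\}$ and to prove the two inequalities $\min\Delta(S) \ge d$ and $d \in \Delta(S)$, which together yield the claim. Proposition \ref{Geroldinger} already tells us that $\min \Delta(S) = \gcd \Delta(S)$, so this is the natural quantity to match.

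For the lower bound I would argue that every length difference between two factorizations of a single element is a multiple of $d$. Given $x \in S$ with factorizations $(a_1,\ldots,a_e)$ and $(b_1,\ldots,b_e)$ whose lengths differ by $L$, set $c_i = a_i - b_i$, so $\sum_i c_i n_i = 0$ and $\sum_i c_i = L$. For any fixed $j$, the rewriting $\sum_i c_i(n_i - n_j) = -L n_j$ together with the fact that $d$ divides every pairwise difference $n_i - n_j$ (it is the gcd of the consecutive differences) forces $d \mid L n_j$ for every $j$. Because $S$ has finite complement in $\N$ we have $\gcd(n_1,\ldots,n_e) = 1$, and a Bezout combination of the $n_j$ then yields $d \mid L$. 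Thus every element of $\Delta(S)$ is a positive multiple of $d$.

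For the reverse direction I would construct an element whose set of factorization lengths contains two values differing by exactly $d$. Since $d = \gcd_{i \ge 2}(n_i - n_1)$, Bezout supplies integers $c_2,\ldots,c_e$ with $\sum_{i \ge 2} c_i(n_i - n_1) = -d\, n_1$, and setting $c_1 := d - \sum_{i \ge 2} c_i$ yields a vector $(c_i) \in \Z^e$ satisfying $\sum_i c_i n_i = 0$ and $\sum_i c_i = d$. Splitting each coordinate $c_i = c_i^+ - c_i^-$ into its positive and negative parts produces two factorizations $(c_i^+)_i$ and $(c_i^-)_i$ of the common element $x := \sum_i c_i^+ n_i \in S$ whose lengths differ by exactly $d$. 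By the lower-bound step, all lengths in $\mathcal{L}(x)$ are congruent modulo $d$, so no factorization length lies strictly between these two values; hence they are consecutive in $\mathcal{L}(x)$ and $d \in \Delta(x) \subseteq \Delta(S)$.

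The one delicate point I anticipate is confirming that this construction genuinely yields a nonzero element with two distinct factorizations. Distinctness is automatic since the length difference is $d > 0$, and $x$ is a nonzero element of $S$ because the positive-part vector $(c_i^+)$ consists of non-negative integers and cannot vanish identically (otherwise every $c_i \le 0$, contradicting $\sum_i c_i = d > 0$). With these observations in place the argument is complete.
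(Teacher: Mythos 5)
The paper itself gives no proof of this proposition; it is quoted directly from \cite{BCKR} (Proposition 2.9 there), so there is no internal argument to compare against. Your proof is correct and self-contained, and it follows the standard strategy for this result: (i) every difference of lengths of two factorizations of the same element is divisible by $d = \gcd\{n_{i+1}-n_i\}$, using that $d$ divides all pairwise differences $n_i - n_j$ together with $\gcd\{n_1,\ldots,n_e\}=1$ (a Bezout combination turning $d \mid L n_j$ for all $j$ into $d \mid L$); and (ii) an explicit trade realizing the difference $d$, obtained by writing $-d\,n_1$ as an integer combination of the $n_i - n_1$ and splitting the resulting relation vector into positive and negative parts. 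The closing observation that step (i) forces all lengths of the constructed element to be congruent modulo $d$, so that the two exhibited lengths are in fact consecutive in $\mathcal{L}(x)$, is exactly the point needed to conclude $d \in \Delta(x)$ rather than merely that $d$ is a difference of two (possibly non-adjacent) lengths; you handle it correctly, as well as the nondegeneracy of the construction. One could shorten step (i) slightly by noting $\gcd(d,n_1)=1$ (since $\gcd(d,n_1)$ divides every $n_i$), so $d \mid L n_1$ already gives $d \mid L$, but this is cosmetic. Your argument would serve as a valid proof of the cited result.
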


There are not so many families of semigroups for which the delta set is known.  However, it is easy to see that every set consisting of a single element occurs as a delta set of a numerical semigroup of embedding dimension two. 
\begin{prop}
Let $S = \langle n_1, n_2\rangle$ with $n_1 < n_2$ satisfying $\gcd\{n_1, n_2\} = 1$.  Then $\Delta(S) = \{n_2 - n_1\}$.
\end{prop}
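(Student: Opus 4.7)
The plan is to exploit the fact that factorizations in an embedding dimension two semigroup are essentially one-dimensional. Suppose $(a_1,a_2)$ and $(b_1,b_2)$ are two factorizations of the same element $x \in S$. Then $(a_1 - b_1)n_1 = (b_2 - a_2)n_2$, and since $\gcd(n_1,n_2) = 1$, I get $n_2 \mid (a_1 - b_1)$ and $n_1 \mid (b_2 - a_2)$. Hence every factorization of $x$ has the form $(a_1 + kn_2,\, a_2 - kn_1)$ for some integer $k$, subject to both entries remaining nonnegative.

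From this parametrization the length of the shifted factorization is $a_1 + a_2 + k(n_2 - n_1)$. As $k$ ranges over the set of integers giving nonnegative coordinates (which forms an interval of consecutive integers), the lengths form an arithmetic progression with common difference $n_2 - n_1$. In particular, for any $x \in S$ the set $\mathcal{L}(x)$ is an arithmetic progression with common difference $n_2 - n_1$, so $\Delta(x) \subseteq \{n_2 - n_1\}$, and therefore $\Delta(S) \subseteq \{n_2 - n_1\}$.

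To show the containment is not strict, I need to exhibit a single element with at least two distinct factorization lengths. The standard witness is $x = n_1 n_2$, which admits the two factorizations $(n_2, 0)$ and $(0, n_1)$ of lengths $n_2$ and $n_1$ respectively; their difference is $n_2 - n_1 > 0$, so $\Delta(n_1 n_2) = \{n_2 - n_1\}$, and equality $\Delta(S) = \{n_2 - n_1\}$ follows.

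There is no real obstacle here — the only subtlety is making sure that the arithmetic progression of lengths is indexed by \emph{consecutive} integers $k$ (so that no length is skipped and $\Delta(x)$ really does equal a subset of $\{n_2 - n_1\}$ rather than a multiple of it), but this is immediate because the admissible values of $k$ form an integer interval.
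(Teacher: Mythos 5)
Your proof is correct. The paper actually states this proposition without proof (it is presented as an easy, well-known fact), and your argument --- parametrizing all factorizations of $x$ as $(a_1 + kn_2,\, a_2 - kn_1)$ over an integer interval of $k$'s via the $\gcd$ condition, so that $\mathcal{L}(x)$ is an arithmetic progression with difference $n_2 - n_1$, and then using $n_1 n_2$ as the witness element --- is exactly the standard argument the paper implicitly has in mind, with the one genuine subtlety (consecutiveness of the admissible $k$'s) correctly identified and handled.
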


More generally, every set of the form $\{d,2d,\ldots, td\}$ is also known to occur as a delta set.
\begin{prop}[Corollary 4.8 in \cite{BCKR}]
Let $S = \langle n, n+d, (d+1)n-d\rangle$ with $n \ge 3,\ d\ge 1$ and $\gcd\{n,d\} = 1$.  Then 
\[
\Delta(S) = \left\{d,2d,\ldots, \left\lfloor\frac{n+d-1}{d+2} \right\rfloor d \right\}.
\]
\end{prop}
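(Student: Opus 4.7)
The plan is to prove the equality in two directions. Set $M = \lfloor (n+d-1)/(d+2) \rfloor$, so that $M+1 = \lceil (n+d)/(d+2) \rceil$ and in particular $(M+1)(d+2) \ge n+d$. From Proposition~\ref{MinD}, $\min\Delta(S) = \gcd\{d, d(n-2)\} = d$, so $\Delta(S)$ already lies in the positive multiples of $d$; it suffices to determine which multiples appear. The kernel of the factorization map $\varphi$ is generated by the two relations $(d+2)n_1 = n_2+n_3$ and $n\cdot n_2 = (n+d)n_1$, with corresponding integer vectors $v_A = (d+2,-1,-1)$ and $v_B = (n+d,-n,0)$; each of these changes a factorization length by exactly $d$. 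Given a reference factorization $(a^*,b^*,c^*)$ of $x\in S$, the remaining factorizations are parametrized by pairs $(p,q)\in\Z^2$ making $(a^*,b^*,c^*)+pv_A+qv_B$ coordinatewise non-negative, and their length is $a^*+b^*+c^*+(p+q)d$.

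For the inclusion $\{d,2d,\ldots,Md\}\subseteq\Delta(S)$, I fix $j\in\{1,\ldots,M\}$ and work with $x_j = (M-j)n_2+(M+1)n_3$, using the reference $(0,M-j,M+1)$. The three admissibility inequalities become $p\le M+1$, $p+qn\le M-j$, and $p(d+2)+q(n+d)\ge 0$, and I would verify that the set $T$ of attained $p+q$ values is exactly $\{0,1,\ldots,M-j\}\cup\{M\}$. The pair $(s,0)$ is admissible for each $0\le s\le M-j$, yielding length $2M-j+1+sd$; the pair $(M+1,-1)$ is admissible because $(M+1)(d+2)\ge n+d$, yielding length $2M-j+1+Md$; and for each intermediate $s\in\{M-j+1,\ldots,M-1\}$ the constraints force $\lceil(sn-M+j)/(n-1)\rceil = s+1$ while $\lfloor s(n+d)/(n-2)\rfloor = s$, where the latter uses that $s(d+2)/(n-2)<1$ for $s\le M-1$ because $\lceil(n-2)/(d+2)\rceil = M$. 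No integer $p$ satisfies both bounds, so $\mathcal{L}(x_j)$ contains two consecutive lengths differing by exactly $jd$, proving $jd\in\Delta(S)$.

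For the reverse inclusion $\Delta(S)\subseteq\{d,2d,\ldots,Md\}$, it suffices to show that for every $x\in S$ and every admissible $(p_1,q_1)$ that does not maximize $p+q$ on the feasibility polytope, there exists another admissible $(p_2,q_2)$ with $0<(p_2+q_2)-(p_1+q_1)\le M$. Three moves handle this: the unit moves $(1,0)$ and $(0,1)$ each give a gap of $1$ when the relevant slack in the defining inequalities is available, and the compound move $(M+1,-1)$ gives a gap of exactly $M$ and is automatically compatible with constraints (ii) and (iii) (again using $(M+1)(d+2)\ge n+d$) provided there is at least $M+1$ room along the $p$-axis. A case analysis on which face or vertex of the polytope $(p_1,q_1)$ sits against confirms that at least one of these three moves is admissible whenever $(p_1,q_1)$ is not already the $(p+q)$-maximum.

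The main obstacle is the upper bound: one must walk through the configurations of which of the three inequalities are tight and verify in each case that the gap to the next admissible $p+q$ is at most $M$, which in particular requires controlling the case where edge (ii) or edge (iii) is saturated and the compound move $(M+1,-1)$ comes into play. The lower bound, by contrast, is essentially a direct construction, resting on the arithmetic identity $\lceil(n-2)/(d+2)\rceil = M$.
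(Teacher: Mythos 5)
The paper itself never proves this proposition --- it is quoted as Corollary 4.8 of \cite{BCKR} --- so your argument has to stand entirely on its own. Your setup and your lower bound do stand. The vectors $v_A=(d+2,-1,-1)$ and $v_B=(n+d,-n,0)$ really do generate the full lattice of relations: their cross product equals $-(n,\,n+d,\,(d+1)n-d)$, whose entries have greatest common divisor $1$ because $\gcd\{n,d\}=1$. (This determinant check should be stated explicitly, since if $v_A,v_B$ only generated a proper sublattice your parametrization by $(p,q)$ would miss factorizations and the ``no intermediate lengths'' step would be unsound.) Granting that, your element $x_j=(M-j)n_2+(M+1)n_3$ works: for $p+q=s$ with $M-j+1\le s\le M-1$ the constraints force $s+1\le p\le s$, which is empty, using exactly the identity $(M-1)(d+2)\le n-3$ that you invoke. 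So $jd\in\Delta(S)$ for all $j\in[1,M]$.

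The genuine gap is in the upper bound: the claim that the three moves $(1,0)$, $(0,1)$, $(M+1,-1)$ suffice to advance from any non-maximal point is false. Take $n=10$, $d=1$, so $S=\langle 10,11,19\rangle$, $M=3$, $v_A=(3,-1,-1)$, $v_B=(11,-10,0)$, and consider the factorization $(11,0,2)$ of $148$, of length $13$. Adding $v_A$ makes the second coordinate $-1$; adding $v_B$ makes it $-10$; adding $4v_A-v_B=(1,6,-4)$ makes the third coordinate $-2$. So none of your three moves is admissible, yet $(11,0,2)+2v_A-v_B=(6,8,0)$ is a factorization of $148$ of length $14$, so $(11,0,2)$ is not the $(p+q)$-maximum and your walk gets stuck there. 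The structural reason is that when the second coordinate is $0$ and the third coordinate is at most $M$, the only length-increasing moves have the form $(p,-1)$ with $2\le p\le c$, and unlike $(M+1,-1)$ these are \emph{not} automatically compatible with the first-coordinate constraint: they require $a\ge (n+d)-p(d+2)$. The argument can be repaired --- show that if $b=0$, $c\le M$ and some $(p',-m)$ with $m\ge 1$ is admissible, then $(p',-1)$ is already admissible and increases the length by $(p'-1)d\le (M-1)d$; combine this with the observations that $v_A$ applies whenever $b,c\ge 1$, that $(M+1,-1)$ applies whenever $b=0$ and $c\ge M+1$, and that any $(a,b,0)$ with $b<n$ is already of maximal length --- but as written the proof has a hole at precisely the step you yourself flagged as the main obstacle.
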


The delta sets of the above proposition begin with a minimum value $d$ and then contain all multiples of $d$ up to some maximum.  We call this an \emph{interval with difference $d$}.  It is more difficult to find classes of semigroups with delta sets not of this form.  In order to show that an integer $k$ is in the delta set of a semigroup $S$ we need only find an element $x\in S$ with $k \in \Delta(x)$.  Showing that $k \not\in \Delta(S)$ is generally much more challenging.  The explicit computation of the delta sets of the following family show that large `gaps' can occur within delta sets, that there are delta sets which are in some sense far from being intervals.
\begin{prop}[Proposition 4.9 in \cite{BCKR}]
Let $S = \langle n, n+1, n^2-n-1\rangle$ with $n\ge 3$.  Then
\[
\Delta(S) = [1,n-2] \cup \{2n-5\}.
\]
\end{prop}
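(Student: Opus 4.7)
The plan is to prove both inclusions. For $[1,n-2]\cup\{2n-5\}\subseteq\Delta(S)$ I will use two explicit elements. The element $x=n\cdot n_1=n^2$ has exactly the two factorizations $(n,0,0)$ and $(0,1,1)$ via the relation $n_2+n_3=n^2$, so $\Delta(x)=\{n-2\}$. For the remaining differences I will take $x=(n-1)n_3$ and enumerate all its factorizations by solving $an+b(n+1)=jn_3$ with $c=(n-1)-j$ for each $j\in\{0,2,3,\ldots,n-1\}$. The crucial subtlety is that the index $j=1$ contributes no factorizations, because $n_3=n^2-n-1$ is the Frobenius number of $\langle n,n+1\rangle$ and hence not representable there. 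A direct calculation then shows that the lengths at each index $j\ge 2$ form an interval of $j-1$ consecutive integers, that consecutive such intervals are separated by gaps $n-3,n-4,\ldots,1$, and that the isolated length $n-1$ coming from $j=0$ is separated from the next interval by a jump of exactly $2n-5$; this yields $\Delta((n-1)n_3)\supseteq\{1,2,\ldots,n-3\}\cup\{2n-5\}$.

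For the reverse inclusion I will stratify the factorizations of an arbitrary $x\in S$ by the coefficient $c$ of $n_3$. For each $c$ with $y_c:=x-cn_3$ representable in $\langle n,n+1\rangle$, the solutions to $an+b(n+1)=y_c$ form a single one-parameter family $(a,b)=(a_0-t(n+1),\,b_0+tn)$ for $0\le t\le t_{\max}(c)$, and the factorization lengths $a_0+b_0+c-t$ form a contiguous block of integers $I_c$; so within any single slice only the difference $1$ can appear. The between-slice analysis hinges on $n_3\equiv -1\pmod n$: since $b_0(c)=y_c\bmod n$ changes by $+1$ or $-(n-1)$ as $c$ increments, a short computation gives $\max I_c-\max I_{c+1}\in\{n-2,n-3\}$. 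Consequently, whenever $I_c$ and $I_{c+1}$ are disjoint with $\min I_c>\max I_{c+1}$, the gap $\min I_c-\max I_{c+1}$ lies in $[1,n-2]$.

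The main obstacle is handling ``skipped'' slices, where some intermediate $c+1$ is invalid because $y_{c+1}$ is a hole of $\langle n,n+1\rangle$. The decisive input is again that the Frobenius number of $\langle n,n+1\rangle$ equals $n_3$: every hole $h$ satisfies $h\le n_3$, so if $y_{c+1}=h$ is a hole then $y_{c+2}=h-n_3\le 0$. The only scenario in which $c$ and $c+2$ are both valid while $c+1$ is not is therefore $h=n_3$, which forces $x=(c+2)n_3$. In this case $y_c=2n_3$ admits the unique $\langle n,n+1\rangle$-representation $(a,b)=(n-1,n-2)$ and $y_{c+2}=0$, so $I_c=\{(2n-3)+c\}$ and $I_{c+2}=\{c+2\}$ are singletons separated by exactly $2n-5$. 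Since no two consecutive $c$'s can both be skipped, this is the only possible jump in $\mathcal{L}(x)$ beyond $[1,n-2]$; combining all cases gives $\Delta(x)\subseteq[1,n-2]\cup\{2n-5\}$.
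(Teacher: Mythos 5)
This proposition is quoted in the paper as background (Proposition 4.9 of the cited reference) and is not proved there, so there is no in-paper argument to compare against; your proposal therefore stands or falls on its own, and on checking it, it is essentially correct. Your two key structural observations are exactly right: (i) $n_3=n^2-n-1$ is precisely the Frobenius number of $\langle n,n+1\rangle$, which both kills the slice $j=1$ in the lower-bound computation and controls skipped slices in the upper bound; (ii) stratifying $\varphi^{-1}(x)$ by the coefficient $c$ of $n_3$ reduces everything to representations in the two-generator semigroup, where lengths form intervals. The arithmetic checks out: for $x=(n-1)n_3$ the slice $j\ge 2$ has lengths $[j(n-3)+n,\,(j+1)(n-2)]$, an interval of $j-1$ integers, the inter-slice gaps are $n-1-j$ for $j=2,\dots,n-2$, and the jump from the isolated length $n-1$ to $3n-6$ is $2n-5$; in the upper bound, $\max I_c-\max I_{c+1}\in\{n-2,n-3\}$ follows from $n_3\equiv -1\pmod n$, and the skip case forces $x=(c+2)n_3$, $I_c=\{2n-3+c\}$, $I_{c+2}=\{c+2\}$, gap exactly $2n-5$.

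One point should be tightened before this is fully rigorous: your upper bound implicitly assumes that every gap in $\mathcal{L}(x)=\bigcup_c I_c$ occurs between $\min I_c$ and $\max I_{c'}$ for \emph{consecutive} valid slices $c<c'$. This requires knowing the intervals $I_c$ are ordered, i.e.\ that $\min I_c$ is also (weakly) monotone decreasing in $c$, not just $\max I_c$; otherwise one must rule out a later slice leapfrogging into or above an earlier gap. Fortunately this follows from the same computation you already did for the maxima, using the companion congruence $n_3\equiv 1\pmod{n+1}$: writing $a_{\min}(c)=(-y_c)\bmod (n+1)$, one gets
\begin{equation*}
\min I_c-\min I_{c+1}\in\{n-3,\,n-2\},
\end{equation*}
so both endpoints decrease and the interval structure is as you assumed. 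With that sentence added, the proof is complete, and it is a clean, self-contained argument resting on the pleasant coincidence that the third generator equals the Frobenius number of the first two.
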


Very little is known about sets that cannot occur as delta sets.  Given a semigroup $S$ we can consider factorizations with respect to a non-minimal generating set and give a corresponding definition of the delta set of $S$.  In this setting there is one main result relevant to the realization problem.
\begin{thm}[Theorem 3.12 in \cite{CDHK}]
Let $S = \langle n_1, n_2\rangle$ and let $s = i n_1 + j n_2$ with $j \ge 0$ and $0 \le i < n_2$.  If the delta set of $S$ with respect to the generating set $\{n_1, n_2, s\}$ is $\{1,t\}$ then $t = 2$.
\end{thm}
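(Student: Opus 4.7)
My first move is to decompose the three-generator length set by the number $c$ of copies of $s$ used: if $(a,b,c)$ is a factorization of $x$ in $\{n_1,n_2,s\}$, then $(a,b)$ is a factorization of $x-cs$ in $\{n_1,n_2\}$, and the three-generator length exceeds the two-generator length of $(a,b)$ by $c$. Writing $d = |n_2-n_1|$, $T = i+j$, and $e = T-1$, this gives
\[
\mathcal{L}(x) \;=\; \bigcup_{\substack{c\ge 0\\ x-cs\in S}} \bigl(\mathcal{L}_{n_1,n_2}(x-cs)+c\bigr),
\]
in which each summand is an arithmetic progression with common difference $d$. Every difference in $\Delta(S)$ (with respect to $\{n_1,n_2,s\}$) therefore lies in $d\Z+e\Z$, so $1\in\Delta(S)$ forces $\gcd(d,e)=1$.

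I would then read off $t$ from the element $x = s$. Because $0 \le i < n_2$, the factorization $(i,j)$ attains the minimum length $T$ of $\mathcal{L}_{n_1,n_2}(s)$; together with the three-generator factorization $(0,0,1)$ of length $1$, the two smallest elements of $\mathcal{L}(s)$ are $1$ and $T$, placing $T-1 = e$ in $\Delta(s)$. So if $\Delta(S) = \{1,t\}$ with $t \ge 2$, then $t = e$, and the remaining work is to rule out $e \ge 3$.

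For the contradiction I would use $x = k_0 s$, where $k_0 \ge 1$ is the least integer with $|\mathcal{L}_{n_1,n_2}(k_0 s)| \ge 2$ (such a $k_0$ exists because the number of $\{n_1,n_2\}$-factorizations grows with $k$). For $k < k_0$ the set $\mathcal{L}_{n_1,n_2}(ks)$ is the singleton $\{kT\}$, so
\[
\mathcal{L}(k_0 s) \;=\; \mathcal{L}_{n_1,n_2}(k_0 s) \,\cup\, \bigl\{\,k_0 T - ce \,:\, 1 \le c \le k_0 \,\bigr\}.
\]
When $d = 1$, the first piece contains the consecutive integers $k_0 T$ and $k_0 T - 1$, so the sorted union of $\mathcal{L}(k_0 s)$ exhibits a jump from $k_0 T - 1$ to $k_0 T - e$ of size $e - 1$; for $e \ge 3$ this produces a difference in $\Delta(k_0 s)$ that is neither $1$ nor $e$, contradicting $\Delta(S) = \{1, e\}$. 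When $d \ge 2$, I would replace $k_0 s$ by $k_0 s + N n_1 n_2$ for a suitably chosen $N$, using $\gcd(d,e) = 1$ and a Bezout combination to manufacture a difference of $1$ inside one of the $c$-pieces while retaining an isolated gap of size strictly between $1$ and $e$ near the bottom of $\mathcal{L}(x)$.

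\textbf{Main obstacle.} The heart of the argument is the $d \ge 2$ step: one needs an $x$ large enough that several adjacent $c$-pieces overlap (forcing a unit difference via $\gcd(d,e) = 1$) but not so large that every intermediate gap is filled in to form an interval. I expect this will require a uniform construction across sub-cases distinguished by the sizes of $i,j$ relative to $n_1,n_2$, and will be the most technical step of the proof.
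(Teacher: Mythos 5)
A preliminary remark: the paper you were given does not prove this statement at all; it is quoted verbatim as Theorem 3.12 of \cite{CDHK} and used only as background, so your proposal can only be judged on its own internal soundness, not against a proof in this paper.

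Your preparatory steps are correct: the decomposition $\mathcal{L}(x)=\bigcup_{c}\bigl(\mathcal{L}_{n_1,n_2}(x-cs)+c\bigr)$, the conclusion $\gcd(d,e)=1$, and the observation that $e=T-1\in\Delta(s)$ (hence $t=e$ whenever $e\ge 2$) all hold. The proof collapses at the contradiction step. In the case $d=1$ you assert that $\mathcal{L}_{n_1,n_2}(k_0s)$ contains both $k_0T$ and $k_0T-1$. Nothing forces this: since $0\le i<n_2$, the factorization $(k_0i,k_0j)$ has the smallest first coordinate, hence the \emph{minimal} length, whenever $k_0i<n_2$; and the minimality of $k_0$ typically produces exactly this situation, because the second factorization of $k_0s$ usually appears via $k_0j\ge n_1$ rather than $k_0i\ge n_2$. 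In that case $\mathcal{L}_{n_1,n_2}(k_0s)\subseteq\{k_0T,k_0T+1,\ldots\}$, so $\mathcal{L}(k_0s)=\{k_0T-ce:1\le c\le k_0\}\cup\mathcal{L}_{n_1,n_2}(k_0s)$ has delta set exactly $\{1,e\}$ and yields no contradiction. Concretely: take $n_1=10$, $n_2=11$, $s=54=1\cdot 10+4\cdot 11$, so $T=5$, $e=4\ge 3$, $d=1$. Then $k_0=3$, $\mathcal{L}_{n_1,n_2}(162)=\{15,16\}$, and $\mathcal{L}(3s)=\{3,7,11,15,16\}$, whose delta set is precisely $\{1,4\}$ --- your chosen element is consistent with the hypothesis you are trying to contradict. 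The element that actually rules out $\Delta(S)=\{1,4\}$ in this example is $4s=216$: there $\mathcal{L}(4s)=\{4,8,12,16,17,20,21\}$, which exhibits the forbidden gap $20-17=3$. So the genuinely missing idea is the correct choice of witness: one must go to a multiple of $s$ (beyond $k_0s$) at which two \emph{adjacent} $c$-pieces each contain at least two lengths and their endpoints misalign, and then prove such misalignment is unavoidable when $e\ge 3$. That is the heart of the theorem, and it is absent.

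Two further gaps. First, the case $e=1$, i.e.\ $T=2$ (so $s\in\{2n_1,\,n_1+n_2,\,2n_2\}$), is not addressed anywhere: your inference ``$\Delta(S)=\{1,t\}$ implies $t=e$'' requires $e\ge 2$, and when $e=1$ your entire strategy of ``ruling out $e\ge 3$'' proves nothing about $t$, although the theorem still makes a nontrivial assertion in that case. Second, the case $d\ge 2$ is, by your own admission, only a plan (choose $N$, use a Bezout combination, keep an isolated gap); since the supposedly easy case $d=1$ already fails for the reason above, there is no basis for confidence in that sketch. As it stands, the proposal is a reasonable reduction plus an incorrect finishing argument, not a proof.
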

We note the similarity of this theorem to the main result of Section \ref{3gen}, however for the duration of the paper we only consider factorizations with respect to minimal generating sets.

Extensive computer calculations described in \cite{BCLMS} give examples of other sets that occur as delta sets.  For example, $\Delta(\langle 6,13,14,16\rangle) = \{1,3\}$. The initial motivation for this work was to understand whether this is one instance of a more general family of examples that have $\Delta(S) = \{1,t\}$ for larger values of $t$.  In the next section we give a construction of such a family.

The authors of \cite{BCLMS} conjecture that for any $t \ge 3,\ \{1,t\}$ cannot occur as the delta set of a semigroup of embedding dimension three.  More specifically, they make the following conjectures.
\begin{conj}[Conjectures 12.3 and 12.4 in \cite{BCLMS}]
\ \\
\begin{enumerate}
\item Let $S = \langle n_1, n_2, n_3\rangle,\ \gcd\{n_3-n_2, n_2-n_1\} = d$, and suppose that $|\Delta(S)| > 1$.  Then $2d \in \Delta(S)$.

\item Let $S = \langle n_1, n_2, n_3\rangle,\ \gcd\{n_3-n_2, n_2-n_1\} = d$, and suppose that $|\Delta(S)| > 2$. Then $3d \in \Delta(S)$.
\end{enumerate}
\end{conj}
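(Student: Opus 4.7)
The plan is to prove part~(1) in the sharper form established by this paper, namely that if $\Delta(S) = \{d,td\}$ with $t \ge 2$ for an embedding-dimension-three numerical semigroup $S$, then $t=2$; part~(2) should then follow by iterating the same machinery once $2d$ is known to be present. First I would normalize: because $n_1,n_2,n_3$ are pairwise congruent modulo $d$, every factorization length of a fixed $x\in S$ lies in a single residue class modulo $d$, so $\Delta(S)\subseteq d\Z$ automatically, and this lets one reduce the length-combinatorics to the case $d=1$ while keeping the generator arithmetic intact.

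The main tool is the minimal presentation of $S=\langle n_1,n_2,n_3\rangle$. By a classical theorem of Herzog, the kernel congruence of the factorization homomorphism $\varphi\colon\N^3\to S$ is generated by at most three trades of the form $c_i n_i = r_{ij}n_j + r_{ik}n_k$, where $c_i$ is the least positive integer with $c_i n_i\in\langle n_j,n_k\rangle$. Each such trade changes the length of a factorization by $\delta_i := c_i - r_{ij} - r_{ik}$; since $n_1<n_2<n_3$, we have $\delta_1>0$ and $\delta_3<0$, while $\delta_2$ can be of either sign. Any two factorizations of the same element are connected by a sequence of trades, so all elements of $\Delta(S)$ arise from integer combinations of the $\delta_i$ applied while keeping the three coordinates nonnegative.

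To exclude $\Delta(S)=\{d,td\}$ for $t\ge 3$, I would argue by contradiction. Suppose there exist $x\in S$ and consecutive lengths $\ell<\ell+td$ in $\mathcal{L}(x)$, realized by factorizations $f,g\in\N^3$. The key step is to show that, under these hypotheses, one can always modify $f$ (or $g$) by a carefully chosen combination of trades to produce a factorization $h$ of $x$ with $|f|<|h|<|f|+td$ and $|h|-|f|\notin\{0,td\}$, yielding a difference strictly between $d$ and $td$ and contradicting $\Delta(S)=\{d,td\}$. The construction of $h$ proceeds by a case analysis on which $\delta_i$ equal $\pm d$ and which are larger, using explicit Apéry-set formulas for $c_i$ and $r_{ij}$ in terms of $n_1,n_2,n_3$ to track nonnegativity of coordinates at each step.

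The principal obstacle is precisely the nonnegativity control in the trade sequences. A length-jump of size $td$ in $\mathcal{L}(x)$ can only occur because every single-trade move from $f$ that would produce a smaller length change is blocked by some coordinate being forced below zero. I expect the hardest case to be when $\delta_1=d$ but the trade $c_1 n_1 \to r_{12}n_2 + r_{13}n_3$ cannot be applied at the witnessing factorization for combinatorial reasons, and the proof ultimately hinges on showing that such blockages propagate: whenever one element of $S$ exhibits a length jump of size $td$ with $t\ge 3$, a related element $x'\in S$ (obtained by a controlled perturbation, e.g.\ adding a multiple of $n_1$) must exhibit an intermediate jump in $\{2d,\dots,(t-1)d\}$, contradicting the assumed form of $\Delta(S)$.
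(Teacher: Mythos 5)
There is a genuine gap, and it starts with what you set out to prove. The statement is the full conjecture, but your plan targets only the special case that the paper itself proves, namely Theorem \ref{no13}: if $\Delta(S)=\{d,td\}$ (so $|\Delta(S)|$ is exactly $2$), then $t=2$. This is not a ``sharper form'' of part~(1); the paper explicitly calls it ``a piece'' of part~(1). Part~(1) asserts $2d\in\Delta(S)$ whenever $|\Delta(S)|>1$, so it also rules out, say, $\Delta(S)=\{d,4d,7d\}$, which no statement about two-element delta sets can touch. And ``part~(2) should then follow by iterating the same machinery'' is not an argument: part~(2) concerns $|\Delta(S)|>2$ and claims $3d\in\Delta(S)$, which does not follow formally from anything you establish. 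Indeed, the paper does not prove the full conjecture at all; it attributes that to \cite{GSLM}, by methods the paper describes as significantly different. So even a complete execution of your plan would prove only the fragment in Theorem \ref{no13}.

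Within that fragment, the core step of your argument is also missing. Your plan is to take a witnessing pair $f,g$ with length gap $td$, $t\ge 3$, and produce by trades an intermediate-length factorization $h$ while ``tracking nonnegativity of coordinates'' --- but this nonnegativity control is precisely the hard technical content, and you flag it yourself as an expectation (``I expect the hardest case to be\dots'') rather than prove it. The paper's proof of Theorem \ref{no13} avoids this bookkeeping entirely by splitting on whether $S$ is symmetric. In the non-symmetric case the minimal presentation is unique, and Corollary 3.1 of \cite{Chap} supplies both $\max\Delta(S)=\max\{\delta_1,\delta_3\}$ and the identity $\delta_2=|\delta_1-\delta_3|$; then $\Delta(S)=\{d,td\}$ forces $\{\delta_1,\delta_3\}=\{d,td\}$, hence $(t-1)d\in\Delta(S)$, an immediate contradiction for $t>2$. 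In the symmetric case --- which your uniform ``three trades'' setup does not isolate --- the presentation has only two relations, the identity $\delta_2=|\delta_1-\delta_3|$ is unavailable, and the paper instead invokes the structure theorem $S=\langle am_1,am_2,bm_1+cm_2\rangle$ (Theorem 10.6 of \cite{GSR}, Theorem 17 of \cite{GSO}) together with the explicit sets of lengths of $a(bm_1+cm_2)$ and of $a(bm_1+cm_2)+am_1m_2$ from Section 4.3 of \cite{Chap}. To repair your proposal you would need to (i) supply the deferred nonnegativity analysis or replace it with these structural citations, (ii) give a separate treatment of the symmetric case, and (iii) find an actual argument for $|\Delta(S)|\ge 3$ and for part~(2), which lies outside the paper's own methods.
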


In Section \ref{3gen} we prove a piece of the first part of this conjecture, that $\{d,td\}$ for $d\ge 1,\ t\ge 3$ does not occur as the delta set of an embedding dimension three numerical semigroup.  While this paper was being completed we discovered that these two conjectures have been proven in \cite{GSLM}.  This paper also gives another proof of one of our main results, Theorem \ref{no13}, but uses significantly different methods.

\section{A Family of Semigroups with Delta Sets of Size Two}

We begin this section by recalling the definition of a minimal presentation of a numerical semigroup $S$.  Informally, a minimal presentation consists of a minimal set of `trades' needed to go between any two factorizations of an element $x\in S$.  We describe this in more precise detail below using the notation of Chapter 5 of \cite{GSR2} and Section 1 of Chapter 7 of \cite{GSR}.  We then introduce an explicit family of semigroups and compute their minimal presentations.  Finally, we use these minimal presentations to show that these semigroups have delta sets of size two.

We closely follow the presentation of \cite{GSO}.  Let $S = \langle n_1,\ldots, n_e\rangle$ be a numerical semigroup of embedding dimension $e$ and recall the factorization homomorphism $\varphi \colon \N^e \rightarrow S$ given in the previous section.  The \emph{kernel congruence} of $\varphi,\ \sim$ is defined by $u \sim v$ if and only if $\varphi(u) = \varphi(v)$.  This is a congruence, meaning that it is an equivalence relation compatible with addition.  Given $\rho \subseteq \N^e \times \N^e$, the congruence generated by $\rho$ is the least congruence containing it.  We say that $\rho$ is a system of generators of $\sim$ if $\rho$ generates $\sim$ as a congruence.  A \emph{presentation} of a numerical semigroup $S$ is a system of generators of its kernel congruence. The presentation is \emph{minimal} if it is a minimal system of generators for this congruence.  See the discussion before and after Proposition 5.11 of \cite{GSR2} for precise definitions.  This result, along with Propositions 8.4 and 8.5 of \cite{GSR} give a concrete way to view these concepts.

\begin{thm}\label{MinPres}
Let 
\[
S = \langle p^x - 2, 2(p^x-2)+1, 2(p^x-2)+p,\ldots, 2(p^x-2)+p^{x-1} \rangle
\] 
where $p, x \ge 2$ and $(p,x) \neq (2,2)$.  Then a minimal presentation of $S$ has size $x+1$ and is given by the elements \small{
\[
((2p-3,2,0,\ldots, 0),(0,\ldots, 0,p)), ((2x(p-1)-1,0,\ldots, 0),(0,p-2,p-1,\ldots, p-1))
\]
}
and for each $i \in [1,x-1]$ 
\[
v_i := ((0,\ldots, 0,p,0,\ldots, 0),(2(p-1),0,\ldots, 0,1,0\ldots, 0)),
\]
where the entry $p$ on the left is in the $i$\textsuperscript{th} position, where we count starting at $0$, and the entry $1$ on the right is in position $i+1$.
\end{thm}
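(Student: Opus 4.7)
The plan is to apply the standard characterization of minimal presentations of a numerical semigroup via \emph{Betti elements} (see Chapter 7 of \cite{GSR} or the discussion in \cite{GSO}). For $s \in S$, the factorization graph $\nabla_s$ has $\varphi^{-1}(s)$ as its vertex set, with an edge between two factorizations whenever some coordinate is positive in both. An element $s$ is a Betti element when $\nabla_s$ is disconnected, and the size of any minimal presentation of $S$ equals $\sum_s (c(s) - 1)$, where $c(s)$ counts the connected components of $\nabla_s$ and the sum ranges over Betti elements. The strategy is to show that the Betti elements of $S$ are precisely the $x+1$ elements
\[
p n_1,\ p n_2,\ \ldots,\ p n_x,\ \text{and}\ (2x(p-1)-1) n_0,
\]
that $c(s) = 2$ for each, and that the $x+1$ listed pairs supply one connecting edge across the two components of each $\nabla_s$.

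The first step is direct verification that each listed pair lies in the kernel congruence. Writing $n_j = 2 n_0 + p^{j-1}$ for $j \geq 1$ and using $p^x = n_0 + 2$, both sides of the first pair evaluate to $(2p+1) n_0 + 2 = p n_x$; both sides of the second to $(2x(p-1)-1) n_0$; and both sides of each $v_i$ to $p n_i = 2p n_0 + p^i$. These are routine algebraic identities.

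The second step enumerates the factorizations of each candidate Betti element and verifies $\nabla_s$ has exactly two components. The key observation is that any factorization $(a_0, \ldots, a_x)$ of $s$ satisfies $\sum_{j \geq 1} a_j p^{j-1} \equiv s \pmod{n_0}$, and once $(a_1, \ldots, a_x)$ is chosen, the coordinate $a_0$ is forced by $a_0 n_0 = s - 2 n_0 \sum_{j \geq 1} a_j - \sum_{j \geq 1} a_j p^{j-1}$. For each candidate one enumerates the possible $(a_1, \ldots, a_x)$ using the base-$p$ structure together with the non-negativity of $a_0$. For $p n_i$ with $1 \leq i < x$ the resulting list is exactly the two factorizations in $v_i$, which share no coordinate and hence lie in distinct components; analogous case analyses handle $p n_x$ and $(2x(p-1)-1) n_0$. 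The hypothesis $(p, x) \neq (2, 2)$ enters here because in that case $n_2 = 6 = 3 n_0$ is not a minimal generator, so the embedding dimension collapses to two and the statement breaks down.

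The main obstacle is the converse: ruling out any further Betti elements, equivalently showing that the $x+1$ listed relations generate the full kernel congruence. The plan is to define a normal form --- for example requiring $a_j < p$ for all $j \in [1,x]$ and $a_0 < 2x(p-1) - 1$ --- and reduce every factorization to one such form using $v_i$ (to carry ``base-$p$ digits'' upward), then the first relation (to wrap around from position $x$ back to positions $0$ and $1$), then the second relation (to bring $a_0$ into range). A well-founded measure combining a lexicographic order on $(a_1, \ldots, a_x)$ with $a_0$ as tiebreaker should strictly decrease under each reduction, ensuring termination; uniqueness of the normal form then follows from a direct count against the Apéry set $\mathrm{Ap}(S, n_0)$, which has $n_0$ elements. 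Combining this connectivity argument with the Betti-element enumeration yields a minimal presentation of size exactly $x+1$.
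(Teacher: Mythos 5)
Your overall framework---identify the Betti elements $pn_1,\ldots,pn_x$ and $(2x(p-1)-1)n_0$, show each has exactly two $\mR$-classes (connected components of $\nabla_s$), and count $\sum_s (c(s)-1)$---is the same as the paper's, and your first two steps (verifying the listed pairs lie in the kernel congruence, and enumerating the factorizations of the candidate Betti elements via the base-$p$ structure) go through essentially as in the paper's Lemmas \ref{Lem2}, \ref{Lem3} and \ref{Lem6}. The gap is in the step you yourself flag as the main obstacle. The normal-form rewriting argument fails: the rewriting system you describe does not terminate, and worse, the normal form you posit need not exist. Take $p=2$, $x=3$, so $S=\langle 6,13,14,16\rangle$ and your bound on $a_0$ is $2x(p-1)-1=5$. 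The element $60$ has exactly seven factorizations, namely $(10,0,0,0)$, $(5,0,1,1)$, $(3,0,3,0)$, $(3,2,0,1)$, $(2,0,0,3)$, $(1,2,2,0)$, $(0,0,2,2)$, and every one of them violates either $a_j<p$ for some $j\ge 1$ or $a_0<5$; so no reduction strategy whatsoever can terminate in your normal form. Concretely, your reductions cycle:
\[
(5,0,1,1)\ \to\ (0,0,2,2)\ \to\ (2,0,0,3)\ \to\ (3,2,0,1)\ \to\ (5,0,1,1),
\]
applying in turn the second relation, $v_2$, the first relation, and $v_1$ (each step follows your stated priority: carry digits when possible, otherwise shrink $a_0$). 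Hence no well-founded measure can strictly decrease under every reduction. Your Ap\'ery-set count is also off: there are $p^x=n_0+2$ digit vectors with all entries in $[0,p)$, not $n_0$, so the ``direct count against $\mathrm{Ap}(S,n_0)$'' cannot match up.

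The paper circumvents exactly this by never rewriting $a_0$ downward. Its Lemma \ref{Lem4} shows that a factorization with some digit $a_i\ge p$ ($i\ge 1$) is $\mR$-connected to a factorization with strictly larger $a_0$; since $a_0\le n/(p^x-2)$, this one-directional rewriting terminates. Large $a_0$ is then handled not by rewriting but by a support argument (Lemma \ref{Lem5}): if $a_0\ge 2x(p-1)-1$, trading along the second relation produces a factorization whose support, together with that of the original, meets the support of every factorization of $n$, so $\varphi^{-1}(n)$ is a single $\mR$-class (unless $n$ is the last Betti element). Finally, among factorizations with all digits less than $p$, uniqueness of base-$p$ representations modulo $p^x-2$ forces two factorizations in distinct $\mR$-classes to agree in positions $1,\ldots,x$, contradicting disjointness of supports. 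To salvage your plan you would need to replace the two-sided normal form by this kind of one-sided reduction; as written, both the termination claim and the existence claim for your normal form are false.
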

For the remainder of this section $S$ will denote the semigroup given in Theorem \ref{MinPres}.  

It is known that the minimal presentation of a numerical semigroup of embedding dimension $e$ has size at least $e-1$ and size at most $\frac{(2n_1 - e + 1)(e-2)}{2} +1$, where $n_1$ is the smallest nonzero element of $S$.  The lower bound is Theorem 9.6 in \cite{GSR} and the upper bound is Theorem 8.26 in the same reference.  The semigroups for which the lower bound is an equality, for example those of embedding dimension two, are known as \emph{complete intersection numerical semigroups}.  This class of semigroups has received considerable attention in recent years \cite{AGS, DMS, GSL}, in part because of connections to commutative algebra and algebraic geometry.  We note that the semigroups of Theorem \ref{MinPres} are not complete intersections, but they have minimal presentations of cardinality equal to the embedding dimension, exactly one greater than the lower bound.  

Let $S$ be a numerical semigroup of embedding dimension $e$ and $A = (A_1,\ldots, A_e)$ be a factorization of an element $n\in S$.  Recall that $\varphi^{-1}(n)$ is the set of factorizations of $n$.  The \emph{support} of $A$, denoted $\supp(A)$, is the set of $i$ such that $A_i \neq 0$.  Let $B$ be another factorization of $n$.  Then $A$ and $B$ are in the same \emph{$\mR$-class} if there exists a chain of distinct factorizations $x_0, x_1, \ldots, x_k \in \varphi^{-1}(n)$ such that $x_0 = A, x_k = B$ and for each $i \in [0,k-1],\ \supp(x_i) \cap \supp(x_{i+1}) \neq \emptyset$.  This relation partitions the set $\varphi^{-1}(n)$.  The \emph{Betti elements} are the elements $n\in S$ such that $\varphi^{-1}(n)$ has more than one $\mR$-class.  Since $S$ is finitely presented, this set is finite.

We recall some notation from \cite{GSO}.  For $n \in S$ we define $\rho_n$  as follows:
\begin{itemize}
\item If $\varphi^{-1}(n)$ has a single $\mR$-class, then $\rho_n = \emptyset$.
\item Otherwise, let $\mR_1,\ldots, \mR_k$ be the different $\mR$-classes of $\varphi^{-1}(n)$.  Choose some $v_i \in \mR_i$ for each $i \in [1,k]$ and set $\rho_n = \{(v_1,v_2),(v_2,v_3),\ldots, (v_{k-1},v_k)\}$. 
\end{itemize}
Then $\rho = \bigcup_{n \in S} \rho_n$ is a minimal presentation of $S$.  It is known that all minimal presentations of $S$ have the same cardinality, but they are only unique under an additional hypothesis. 

\begin{thm}[Corollary 6 in \cite{GSO}]
A numerical semigroup $S$ is uniquely presented if and only if every Betti element of $S$ has exactly two factorizations.
\end{thm}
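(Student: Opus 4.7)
The plan is to exploit the recipe for building a minimal presentation described just before the theorem statement: any minimal presentation $\rho$ of $S$ is a disjoint union $\bigcup_{n} \rho_n$ over the finitely many Betti elements $n$ of $S$, where for each Betti $n$ with $\mR$-classes $\mR_1,\ldots, \mR_{k_n}$ in $\varphi^{-1}(n)$ one picks representatives $v_i \in \mR_i$ and sets $\rho_n = \{(v_1, v_2),(v_2, v_3),\ldots,(v_{k_n-1}, v_{k_n})\}$. I will also invoke the structural converse, stated in \cite{GSR2, GSR}, that \emph{every} minimal presentation arises this way. Since a congruence is symmetric, I regard the pair $(u,v)$ and $(v,u)$ as the same generator, so uniqueness of $\rho$ reduces to uniqueness of the data $(\text{representatives}, \text{chain})$ at each Betti element.

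For the easy direction $(\Leftarrow)$, assume every Betti element $n$ has exactly two factorizations. Since $n$ is Betti, $\varphi^{-1}(n)$ has at least two $\mR$-classes; having only two factorizations total forces $k_n = 2$ with each class a singleton $\{v_i\}$. The recipe then outputs $\rho_n = \{(v_1, v_2)\}$ with no ambiguity in either the representatives or the chain, so the union $\rho = \bigcup_n \rho_n$ is uniquely determined.

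For $(\Rightarrow)$ I argue the contrapositive: suppose some Betti element $n$ has at least three factorizations. If $k_n \ge 3$, pick representatives $v_1,\ldots,v_{k_n}$ and compare the chains $(v_1,v_2),(v_2,v_3),(v_3,v_4),\ldots$ and $(v_1,v_3),(v_3,v_2),(v_2,v_4),\ldots$. The first contains the unordered pair $\{v_1,v_2\}$ while the second does not, because $v_3$ lies in a different $\mR$-class from both $v_1$ and $v_2$, so $\{v_1,v_2\} \ne \{v_1,v_3\}$ and $\{v_1,v_2\} \ne \{v_2,v_3\}$; holding the choices at the other Betti elements fixed, this produces two distinct minimal presentations. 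If instead $k_n = 2$, then some class, say $\mR_1$, contains at least two factorizations $v_1 \ne v_1'$; picking any $v_2 \in \mR_2$, both $\{(v_1,v_2)\}$ and $\{(v_1',v_2)\}$ are valid outputs for $\rho_n$ and disagree as unordered pairs, again yielding distinct minimal presentations.

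The main obstacle is the structural converse used in the first paragraph, namely that every minimal presentation restricted to a single Betti element really does arise from some choice of representatives and spanning chain in the recipe above, rather than from some entirely different mechanism that might coincidentally coincide across different choices. Once this is quoted from \cite{GSR2}, the flexibility identified in the case analysis genuinely translates into non-uniqueness of $\rho$, and the two directions combine to give the stated equivalence.
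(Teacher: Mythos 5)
The paper does not actually prove this statement: it is quoted, with citation, as Corollary 6 of \cite{GSO}, so there is no internal proof to compare against. Your argument is, in substance, the standard proof of that corollary, and it is correct once one small inaccuracy in the quoted structural fact is repaired. The precise characterization (the content of Propositions 8.4 and 8.5 of \cite{GSR}, which the paper points to) is that $\rho = \bigcup_n \rho_n$ is a minimal presentation if and only if, for each Betti element $n$, the pairs in $\rho_n$ induce a spanning \emph{tree} on the $\mR$-classes of $\varphi^{-1}(n)$; the tree need not be a chain, and the representative of a given class may differ from edge to edge, so your phrase ``spanning chain'' is not the correct converse in general (with four or more $\mR$-classes, a star-shaped tree yields a minimal presentation that no choice of representatives and chain produces). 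Fortunately, this slip is harmless exactly where you use the converse: in the $(\Leftarrow)$ direction you invoke it only when every Betti element has two factorizations, forcing $k_n=2$ with singleton classes, and there every spanning tree is the single forced edge, so uniqueness follows; in the $(\Rightarrow)$ direction you never need the converse at all, only the forward statement (any choice of representatives and chain yields a minimal presentation), which the paper states explicitly, and your two constructions in each case (reordered chain when $k_n\ge 3$, alternate representative when some class has two elements) do give presentations that differ as sets of unordered pairs, since pairs attached to other Betti elements consist of factorizations of different values. So: correct proof, essentially the one from the cited source, with the tree-versus-chain statement being the one thing to fix in the write-up.
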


Theorem \ref{MinPres} follows from giving the set of Betti elements of $S$ and the factorizations of each such element.

\begin{lemma}\label{Lem1}
Let $S$ be as in Theorem \ref{MinPres}.  The Betti elements of $S$ are $(2x(p-1)-1)(p^x-2)$ and $p(2(p^x-2)+p^{i-1})$ for each $i \in [1,x]$.
\end{lemma}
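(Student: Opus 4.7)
The plan is to establish the lemma in two phases: first, verify that each of the $x+1$ listed elements is indeed a Betti element; second, show that no other element of $S$ has a disconnected $\mR$-class structure.

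The first phase is a direct arithmetic check. Writing $n_0 = p^x-2$ and $n_j = 2(p^x-2) + p^{j-1}$ for $j \in [1,x]$, for each $i \in [1, x-1]$ I would verify that $pn_i$ admits the factorizations $pn_i$ (with support $\{n_i\}$) and $2(p-1)n_0 + n_{i+1}$ (with support $\{n_0, n_{i+1}\}$). The case $i=x$ uses the factorizations $pn_x$ and $(2p-3)n_0 + 2n_1$, which have disjoint supports provided $x \geq 2$, and the element $(2x(p-1)-1)n_0$ admits the factorizations $(2x(p-1)-1)n_0$ and $(p-2)n_1 + (p-1)(n_2 + \cdots + n_x)$, again with disjoint supports (note that when $p=2$ the coefficient $p-2$ vanishes and support begins at $n_2$). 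Since disjoint supports imply distinct $\mR$-classes, each of these $x+1$ elements is a Betti element.

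The second phase, showing there are no other Betti elements, is the main obstacle. I would invoke the standard graph-theoretic characterization: $n$ is a Betti element if and only if the graph $\nabla_n$ with vertex set $\{n_i : n - n_i \in S\}$ and edge set $\{\{n_i, n_j\} : n - n_i - n_j \in S\}$ is disconnected. The key tool for bounding potential Betti elements is the residue structure modulo $n_0 = p^x-2$: each $n_i$ (for $i \geq 1$) satisfies $n_i \equiv p^{i-1} \pmod{n_0}$, while $p^x \equiv 2 \pmod{n_0}$. This severely constrains which elements can admit factorizations in distinct $\mR$-classes. Via case analysis on the multiset of residues of the generators appearing in a pair of disjoint-support factorizations of $n$, one shows that such a pair forces $n$ to coincide with one of the $x+1$ listed elements. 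Equivalently, any vector $A - B$ in the kernel congruence with $\supp(A)\cap\supp(B)=\emptyset$ must match, up to adding a trivial relation, one of the $x+1$ relations already exhibited.

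Isolating this combinatorial reduction cleanly is where the technical heart of the proof lies. The key difficulty is handling the interplay between the congruence $p^x \equiv 2 \pmod{n_0}$ and factorizations that mix several $n_i$'s simultaneously: one must argue that any would-be Betti element outside the list has at least one generator $n_i$ participating in every $\mR$-class (connecting the graph $\nabla_n$), which needs careful treatment of the small cases $p=2$ and small $x$ that the hypothesis $(p,x) \neq (2,2)$ already signals.
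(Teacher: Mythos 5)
Your proposal has two genuine gaps. The first is a logical error in Phase 1: you assert that ``disjoint supports imply distinct $\mR$-classes,'' but this implication goes the wrong way. Distinct $\mR$-classes force disjoint supports, but two factorizations with disjoint supports can perfectly well lie in the \emph{same} $\mR$-class, connected through a third factorization whose support meets both. So exhibiting two disjoint-support factorizations of each listed element does not show it is a Betti element. This is exactly why the paper's Lemmas \ref{Lem2}, \ref{Lem3}, and \ref{Lem6} determine the \emph{complete} set of factorizations of each candidate: for instance, when $p=2$ the element $p(2(p^x-2)+p^{x-1})$ has a third factorization $(2p-1)\cdot(p^x-2) + 1\cdot(2(p^x-2)+p)$, and one must verify it does not bridge the two factorizations you wrote down. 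Your Phase 1 as stated proves nothing until this enumeration (or some substitute for it) is carried out.

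The second gap is that Phase 2 --- showing no element outside the list is a Betti element --- is a plan rather than an argument; you explicitly defer ``the technical heart'' to an unexecuted case analysis on residues. The residue observation $\sum_{i\ge 1} B_i p^{i-1} \equiv \sum_{i \ge 1} A_i p^{i-1} \pmod{p^x-2}$ is indeed the right starting point (it is the paper's too), but by itself it does not constrain anything, because the digits $A_i, B_i$ need not be less than $p$ and base-$p$ uniqueness does not apply. The missing mechanism is a normalization step: within each $\mR$-class one passes to the factorization maximizing the coordinate of the smallest generator, and trading arguments (the paper's Lemmas \ref{Lem4} and \ref{Lem5}) show that for $n$ outside the list one may then assume $A_i, B_i < p$ for all $i \in [1,x]$, $B_0 = 0$, and $A_0 < 2x(p-1)-1$ --- the exceptional elements in those trading lemmas are precisely where the listed Betti elements enter. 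Only after this normalization does uniqueness of base-$p$ representation force $(A_1,\ldots,A_x) = (B_1,\ldots,B_x)$, contradicting disjoint support, apart from the two special digit patterns $(p-2,p-1,\ldots,p-1)$ and $(p-1,\ldots,p-1)$, which must be excluded separately because they are congruent to $0$ and $1$ modulo $p^x-2$. Without this reduction, your case analysis on ``multisets of residues'' has unboundedly many cases and no way to close.
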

Before proving this lemma we show that the generating set given in Theorem \ref{MinPres} is minimal.  Suppose that this is not the case. Then there is a generator that is a nonnegative linear combination of smaller generators, which means that for some $i \in [1,x],\ 2(p^x-2) + p^{i-1}$ can be written as a sum of other generators.  Since $p, x \ge 2$ and at least one is greater than $2$, we see that $p^{i-1} \not\equiv 0 \pmod{p^x-2}$, so this linear combination must contain a term $2(p^x-2) + p^{j-1}$ for some $j < i$.  However, this implies that $p^{i-1} - p^{j-1}$ is a sum of the generators.  Since $p^{j-1}(p^{i-j} - 1) < p^x-2$, this is impossible. Therefore, we have a minimal generating set. 

We now describe the complete set of factorizations of each of the elements given in Lemma \ref{Lem1}.  For each of these elements we exhibit two factorizations in different $\mR$-classes and show that every other element has a single $\mR$-class.  In fact, when $p > 2$ each of these elements has exactly two factorizations, which proves the following.

\begin{cor}\label{Cor1}
Let $S$ be as in the statement of Theorem \ref{MinPres}.  Then $S$ is uniquely presented if and only if $p>2$.
\end{cor}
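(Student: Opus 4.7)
The plan is to apply the criterion of Corollary 6 of \cite{GSO} quoted just above: $S$ is uniquely presented if and only if every Betti element of $S$ has exactly two factorizations. Lemma \ref{Lem1} lists the Betti elements of $S$, and Theorem \ref{MinPres} exhibits two factorizations with disjoint supports for each one (coming from the relations in the minimal presentation). So I only need to show that for $p > 2$ no Betti element admits a third factorization, while for $p = 2$ some Betti element does.

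For the converse direction ($p = 2$ implies not uniquely presented), I would exhibit an extra factorization of $B_x = p n_x = 2 n_x$. Besides the two canonical ones $(0,\ldots,0,p) = (0,\ldots,0,2)$ and $(2p-3,2,0,\ldots,0) = (1,2,0,\ldots,0)$, the tuple $(3,0,1,0,\ldots,0)$ is also a factorization of $B_x$, since
\[
3 n_0 + n_2 \;=\; 3(2^x-2) + \bigl(2(2^x-2)+2\bigr) \;=\; 5(2^x-2)+2 \;=\; 2 n_x.
\]
The hypothesis $(p,x)\neq(2,2)$ forces $x\geq 3$ when $p=2$, so position $2$ is available. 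Thus $\varphi^{-1}(2 n_x)$ has at least three elements and $S$ is not uniquely presented.

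For the forward direction ($p > 2$ implies uniquely presented), I would bound the number of factorizations of each Betti element. For a Betti element $B$, parametrize a factorization $(a_0,\ldots,a_x)\in\varphi^{-1}(B)$ by $m = \sum_{j\geq 1} a_j p^{j-1}$. Using the relations $n_j = 2 n_0 + p^{j-1}$, one obtains $m \equiv B \pmod{n_0}$ and $a_0 + 2b = (B-m)/n_0$ where $b = \sum_{j\ge 1} a_j$, so $m$ has the form $m_0 + c n_0$ for some non-negative integer $c$, with $m_0$ the least non-negative representative of $B\bmod n_0$. The constraint $a_0 \geq 0$ gives an upper bound on $b$, while $b \geq m/p^{x-1}$ together with the refined bound coming from the elementary identity
\[
s_p(n) + (p-1)\, v_p(n) \;\le\; n \qquad (n\ge 1),
\]
with $s_p$ the base-$p$ digit sum and $v_p$ the $p$-adic valuation, yields a lower bound on $b$; this identity follows by a short induction using $s_p(pn') = s_p(n')$ and $v_p(pn') = v_p(n')+1$. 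A direct computation then shows that the admissibility inequality for $B = B_0$ forces $c(2p-3) \le 2p - 3$, i.e.\ $c\le 1$ when $p>2$, and the analogous case analysis for each $B_i$ with $1\le i\le x$ likewise limits the number of factorizations to two; in every case the two admissible values recover exactly the canonical factorizations.

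The main obstacle is the bookkeeping for the forward direction: the explicit base-$p$ expression for the lower bound on $b$ is cleanest when $2c < p^{x-1}$, so the narrow range where $2c \geq p^{x-1}$ needs a separate short argument using only the crude bound $b \geq m/p^{x-1}$ together with $a_0 \geq 0$ and the observation that $c$ cannot be too large without making $m$ exceed the maximum achievable value $p^{x-1}\sum a_j$. These estimates dispatch the remaining cases and complete both directions of the corollary.
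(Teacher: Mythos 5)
Your proposal has the same overall architecture as the paper's: apply Corollary 6 of \cite{GSO} to the Betti elements of Lemma \ref{Lem1} and count the factorizations of each, and your $p=2$ witness $(3,0,1,0,\ldots,0)$ is literally the extra factorization $(2p-1)\cdot(p^x-2)+1\cdot(2(p^x-2)+p)$ that the paper records in Lemma \ref{Lem3}, verified correctly. The genuine divergence is in how the $p>2$ counting is done. The paper (Lemmas \ref{Lem2}, \ref{Lem3}, \ref{Lem6}) first shows that any factorization $B$ of a Betti element satisfies $\sum_{j\ge 1}B_j\le p$, by the one-line observation that otherwise the contribution $2(p^x-2)\sum_{j\ge1}B_j$ overshoots the element; this reduces everything to enumerating representations of $p^i$, of $2$, or of $0$ as a sum of at most $p$ powers of $p$, and for $(2x(p-1)-1)(p^x-2)$ it recycles the trading moves of Lemma \ref{Lem4}. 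You instead parametrize by the multiple $c$ in $m=m_0+cn_0$ and exclude $c\ge 2$ by digit-sum estimates. I checked that this works: for the elements $p(2(p^x-2)+p^{i-1})$ the crude bound $b\ge m/p^{x-1}$ alone suffices (for $i\le x-1$ it kills every $c\ge 1$ against $a_0+2b=2p-c$; for $i=x$ it forces the canonical $(0,\ldots,0,p)$ at $c=1$ and kills $c\ge2$), and for $(2x(p-1)-1)(p^x-2)$ your claimed inequality $c(2p-3)\le 2p-3$ does follow from the bounded-power bound $b\ge q+s_p(r)$ (where $m=qp^{x-1}+r$, $0\le r<p^{x-1}$) combined with $s_p(u-v)\ge s_p(u)-s_p(v)$ and $s_p(n)\le n$; note the identity you cite, $s_p(n)+(p-1)v_p(n)\le n$, is true but is not really the engine here. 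So your route is sound and more uniform than the paper's, but it buys that uniformity with bookkeeping that your sketch only asserts: the per-element case analysis, the uniqueness of the extremal representations (why $b=1$ and $b=p$ force the canonical tuples, which the paper settles via $(p-1)(1+p+\cdots+p^{i-1})=p^i-1$), and the $2c\ge p^{x-1}$ edge case. All of these are fillable by the methods you name, whereas the paper's bound $\sum_{j\ge1}B_j\le p$ makes each count essentially immediate.
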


\begin{lemma}\label{Lem2}
For $i \in [1,x-1]$, the element $n = p(2(p^x-2) + p^{i-1})$ has exactly two factorizations and they are in different $\mR$-classes. 
\end{lemma}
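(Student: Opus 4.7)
The plan is to classify all factorizations $(a_0, a_1, \dots, a_x) \in \N^{x+1}$ of $n$ by reducing the factorization equation to a single scalar identity in two auxiliary quantities and then enumerating the solutions. Set $M = p^x - 2$, $S = \sum_{j=1}^{x} a_j$, and $T = \sum_{j=1}^{x} a_j p^{j-1}$; substituting the generators into $\sum a_j n_j = n = 2pM + p^i$ collapses the equation to
\[
M(a_0 + 2S) + T = 2pM + p^i.
\]
I will exhibit two factorizations --- $A$ with $a_i = p$ and all other coordinates zero, and $B$ with $a_0 = 2(p-1)$, $a_{i+1} = 1$, and all other coordinates zero --- and then show they are the only ones. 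Their supports $\{i\}$ and $\{0, i+1\}$ are disjoint (since $i \in [1, x-1]$), so uniqueness automatically places $A$ and $B$ in different $\mR$-classes.

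First I will establish a length bound. From $\sum_{j \ge 1} a_j n_j \le n$ and $n_j \ge n_1 = 2M + 1$ for every $j \ge 1$, one obtains $S \le n/n_1$; a direct estimate using $i \le x-1$ yields $n/n_1 < p + 1$, so $S \le p$ and each $a_j \le p$ for $j \ge 1$. This forces $T \le p \cdot p^{x-1} = p^x$. Next I will pin down $T$: setting $k := 2p - (a_0 + 2S)$, the identity becomes $T = p^i + kM$. Since $T \ge 0$ and $p^i < M$, we have $k \ge 0$; since $T \le p^x$ and $M = p^x - 2$, we also have $k \le 1$. If $k = 1$, then $a_0 \ge 0$ forces $S \le p - 1$, hence $T \le (p-1)p^{x-1} = p^x - p^{x-1}$; but $T = p^i + M \ge p^x - 2$, and $p^x - 2 > p^x - p^{x-1}$ whenever $p^{x-1} > 2$, which holds for every $(p,x)$ in the hypothesis. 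So $k = 0$, $T = p^i$, and $a_0 + 2S = 2p$.

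Finally I will enumerate the representations $\sum_{j=1}^x a_j p^{j-1} = p^i$ with $S \le p$. Any positive $a_j$ for $j \ge i+2$ already produces $T \ge p^{i+1} > p^i$, so those coordinates vanish, and $a_{i+1} \in \{0, 1\}$. If $a_{i+1} = 1$, the remaining $a_j$ must sum to zero, giving $S = 1$, $a_0 = 2(p-1)$, and factorization $B$. If $a_{i+1} = 0$, then $\sum_{j=1}^{i} a_j p^{j-1} = p^i$, and the only choice consistent with $a_j \le p$ and $S \le p$ is $a_i = p$ --- reducing $a_i$ below $p$ leaves a residual of at least $p^{i-1}$ to be covered by slots $j < i$, each bounded by $p$, which forces $\sum_{j<i} a_j \ge p$ and hence $S \ge 2p - 1 > p$ --- yielding factorization $A$. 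The main obstacle lies in the second step: the inequality $p^{x-1} > 2$ is sharp at $(p,x) = (2,2)$, exactly the excluded case, so the length budget available when $k = 1$ falls just short of what is needed to express $p^i + M$.
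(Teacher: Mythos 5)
Your proof is correct and takes essentially the same route as the paper's: both exhibit the two factorizations with disjoint supports, bound the number of copies of the large generators by $p$ (your $S \le p$ is the paper's $\sum_{j=1}^x B_j \le p$, obtained for the same size reason), reduce to the exact equation $\sum_{j=1}^x a_j p^{j-1} = p^i$, and then enumerate the representations of $p^i$ with digit sum at most $p$ to recover exactly the two factorizations. Your explicit bookkeeping with $k$ replaces the paper's congruence argument modulo $p^x-2$, and your residual-counting replaces its appeal to the identity $(p-1)(1+p+\cdots+p^{i-1}) = p^i-1$, but these are organizational rather than substantive differences.
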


\begin{proof}
Suppose $n = p(2(p^x-2) + p^{i-1})$ where $i \in [1,x-1]$. We check that 
\begin{equation}\label{TwoFactorizations}
n = 2(p-1)\cdot (p^x-2) + 1\cdot (2(p^x-2) + p^i) = p\cdot (2(p^x-2)+p^{i-1}).  
\end{equation}
This gives two factorizations of $n$ in different $\mR$-classes.  Suppose there is another factorization of $n,\ B = (B_0,B_1,\ldots, B_x)$.  Then since
\[
n = B_0 (p^x-2) + \left(\sum_{j=1}^x B_j \right) 2(p^x-2) + \left(\sum_{j=1}^x B_j p^{j-1}\right),
\]
we see that 
\begin{equation}\label{Bjmod}
\sum_{j=1}^x B_j p^{j-1} \equiv p^i \pmod{p^x-2}.
\end{equation}

Since $i \le x-1$ and $p^i \le p^{x-1} < p^x -2$, if $\sum_{j=1}^x B_j > p$ then
\[
n - \left(\sum_{j=1}^x B_j \right) 2(p^x-2) \le n - 2(p+1)(p^x-2) = p^i-2(p^x-2) < 0.
\] 
Therefore, $\sum_{j=1}^x B_j \le p$.  

If $\sum_{j=1}^x B_j = p$, then 
\[
n - \left(\sum_{j=1}^x B_j \right) 2(p^x-2) = p^i,
\]
and (\ref{Bjmod}) implies that $\sum_{j=1}^x B_j p^{j-1} = p^i$.   Since 
\[
(p-1) \left(1+p+p^2 +\cdots + p^{i-1}\right) = p^i-1,
\] 
the only way to write
\[
p^i = \sum_{j=1}^{x} a_j p^{j-1}
\]
with each $a_j \ge 0$ and $\sum_{j=1}^{x} a_j = p$ is to have $a_{i} = p$ and $a_j = 0$ for $j \neq i$.  This gives the second factorization of (\ref{TwoFactorizations}).

If $\sum_{j=1}^x B_j < p$ then $\sum_{j=1}^x B_j p^{j-1} < (p-1)p^{x-1} < p^x-2$ implies $\sum_{j=1}^x B_j p^{j-1} = p^i$.  The only way to write 
\[
p^i = \sum_{j=1}^{x} a_j p^{j-1}
\]
with each $a_j \ge 0$ and $\sum_{j=1}^{x} a_j < p$ is to have $a_{i+1} = 1$ and $a_j = 0$ for $j\neq i$.  This gives the first factorization of (\ref{TwoFactorizations}) and completes the proof of the lemma.
\end{proof}

\begin{lemma}\label{Lem3}
The element $n = p(2(p^x-2) + p^{x-1})$ has exactly two factorizations when $p>2$ and has exactly three factorizations when $p=2$.  In either case, the factorizations of this element belong to exactly two $\mR$-classes.
\end{lemma}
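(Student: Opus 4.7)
The plan is to proceed in close analogy with the proof of Lemma~\ref{Lem2}, but paying attention to the arithmetic degeneracy that occurs at $i=x$. Set $n_0=p^x-2$ and $n_j=2(p^x-2)+p^{j-1}$ for $j\in[1,x]$, and write any factorization as $B=(B_0,B_1,\dots,B_x)$. First I would exhibit the candidate factorizations directly. One factorization is $B=(0,\dots,0,p)$, giving $n=p\cdot n_x$. A second is $B=(2p-3,2,0,\dots,0)$, coming from the identity $n=(2p-3)(p^x-2)+2(2(p^x-2)+1)$; this is a valid nonnegative factorization as long as $p\ge 2$, which holds, and is meaningful (nonzero $B_0$) precisely when $p\ge 2$. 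When $p=2$ I would additionally write down the factorization $B=(3,0,1,0,\dots,0)$, which gives $n=3(2^x-2)+(2(2^x-2)+2)=5(2^x-2)+2$; a direct check confirms this equals $n=2(2(2^x-2)+2^{x-1})$.

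The main work is then to show these are the only factorizations. Writing $\sigma=\sum_{j=1}^x B_j$ and $\tau=\sum_{j=1}^x B_j p^{j-1}$, the defining equation $n=2p(p^x-2)+p^x=(2p+1)(p^x-2)+2$ becomes
\[
(B_0+2\sigma)(p^x-2)+\tau=(2p+1)(p^x-2)+2.
\]
Reducing modulo $p^x-2$ yields $\tau\equiv 2$, so $\tau=2+k(p^x-2)$ for some $k\ge 0$; the equation then forces $B_0+2\sigma=2p+1-k$. The bound $\tau\le p^{x-1}\sigma$ and the assumption $(p,x)\ne(2,2)$ (which guarantees $p^{x-1}>2$ in the relevant estimates) will rule out $k\ge 2$. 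For $k=1$ one gets $\tau=p^x$ and $B_0+2\sigma=2p$; combining the upper and lower bounds on $\sigma$ collapses to $\sigma=p$, $B_0=0$, and since $\tau=p^x$ forces all weight onto the largest coefficient, $B_x=p$, recovering the first factorization. For $k=0$ one needs $\tau=2$ and $B_0+2\sigma=2p+1$, so $\sigma\in\{0,\dots,p\}$; a short analysis of how $2$ can be written as $\sum B_j p^{j-1}$ with $\sum B_j=\sigma$ will show that for $p>2$ the only solution is $B_1=2$ (giving the factorization $(2p-3,2,0,\dots,0)$), while for $p=2$ there are exactly two solutions, namely $B_1=2$ (giving $(1,2,0,\dots,0)$) and $B_2=1$ (giving $(3,0,1,0,\dots,0)$).

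Finally I would read off the $\mR$-class structure from the supports. The factorization $(0,\dots,0,p)$ has support $\{x\}$, while the other factorizations all have $0$ in their support and, since $x\ge 3$ in the case $p=2$ (as $(p,x)\ne(2,2)$), they have no index in common with $\{x\}$. Hence $(0,\dots,0,p)$ forms one $\mR$-class on its own, and the remaining factorization(s) form a second $\mR$-class (for $p=2$, the two factorizations $(1,2,0,\dots,0)$ and $(3,0,1,0,\dots,0)$ share the index $0$ and are therefore directly $\mR$-related).

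The main obstacle I anticipate is the bookkeeping at $i=x$: unlike in Lemma~\ref{Lem2}, the value $p^x$ is exactly $2\bmod(p^x-2)$ rather than just congruent to a small power, which is precisely what opens the door to the extra $p=2$ factorization $(3,0,1,0,\dots,0)$. Carefully separating the $k=0$ and $k=1$ regimes, and tracking when the inequality $\tau<p^x-2$ becomes sharp, is the step most likely to require care; this is also where the hypothesis $(p,x)\ne(2,2)$ enters essentially.
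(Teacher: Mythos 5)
Your proposal is correct and follows essentially the same route as the paper: exhibit the factorizations $(0,\dots,0,p)$, $(2p-3,2,0,\dots,0)$, and (for $p=2$) $(3,0,1,0,\dots,0)$, reduce the defining equation modulo $p^x-2$ to force $\sum_{j\ge 1}B_jp^{j-1}\in\{2,p^x\}$, enumerate the representations of $2$ and $p^x$ as bounded sums of powers of $p$, and read off the two $\mR$-classes from disjointness of supports. Your bookkeeping via the parameter $k$ (versus the paper's direct bound $\sum_{j\ge1}B_j\le p$ from a size comparison) is only a cosmetic reorganization of the same counting argument.
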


\begin{proof}
Let $n = p(2(p^x-2) + p^{x-1})$.  We check that 
\begin{equation}\label{TwoFactorizationsB}
n  = p\cdot (2(p^x-2)+p^{x-1})= (2p-3)\cdot (p^x-2) + 2 \cdot (2(p^x-2) + 1).
\end{equation}
If $p = 2$ we have one additional factorization,
\[
n = (2p-1)\cdot (p^x-2) + 1\cdot (2(p^x-2) + p).
\]  

Suppose that there is another factorization $B = (B_0,\ldots, B_x)$.  As in the proof of the previous lemma, we see that $\sum_{j=1}^x B_j p^{j-1} \equiv 2 \pmod{p^x-2}$. Since 
\[
n - (p+1)(2(p^x-2) + 1) = p^x - 2(p^x-2) = 4 - p^x < 0,
\]
we see that $\sum_{j=1}^x B_j \le p$. 

If $p = 2$ there are exactly three ways to add up at most two elements from $\{1,2,2^2,\ldots, 2^{x-1}\}$ to get something equivalent to $2$ modulo $2^x-2$. If $p>2$ there are exactly two ways to add up at most $p$ elements from $\{1,p,p^2,\ldots, p^{x-1}\}$ two get something equivalent to $2$ modulo $p^x-2$, since their sum must equal either $2$ or $p^x$.  Choosing such a set determines the factorization $B$ and we see that we have found all factorizations of $n$.
\end{proof}

In order to characterize the set of factorizations of the last Betti element of $S$ we prove a lemma that allows us to better understand the factorization in each $\mR$-class with the largest number of copies of $p^x-2$, the smallest generator of $S$.

\begin{lemma}\label{Lem4}
Suppose that $A = (A_0,A_1,\ldots, A_x)$ is a factorization of $n \in S$ with $A_i \ge p$ for some $i \in [1,x]$.  Then either $A$ is in the same $\mR$-class as a factorization $(B_0,\ldots, B_x)$ with $B_0 > A_0$, or $n = p(2(p^x-2) + p^{i-1})$.
\end{lemma}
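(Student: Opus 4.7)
The plan is to exhibit $B$ directly by applying one of the trades from the proposed minimal presentation and then to verify the $\mR$-class condition by a short support analysis. When $i \in [1,x-1]$, the trade $v_i$ does exactly what we need: it replaces $p$ copies of the generator $2(p^x-2)+p^{i-1}$ with $2(p-1)$ copies of $p^x-2$ and one copy of $2(p^x-2)+p^i$. When $i = x$ there is no $v_x$, but the first relation listed in Theorem~\ref{MinPres}, namely
\[
(2p-3)(p^x-2) + 2\bigl(2(p^x-2)+1\bigr) = p\bigl(2(p^x-2)+p^{x-1}\bigr),
\]
plays the analogous role, trading $p$ copies of the generator in position $x$ for $2p-3$ copies of the generator in position $0$ and $2$ copies of the generator in position $1$. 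Both trades are applicable because $A_i \ge p$, and both strictly increase the $0$-coordinate, since $2(p-1) \ge 2$ and $2p-3 \ge 1$ for $p \ge 2$.

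So let $B$ be the result of applying the appropriate trade. It remains to show that $\supp(A) \cap \supp(B) \ne \emptyset$, unless $n = p(2(p^x-2)+p^{i-1})$. Outside the three coordinates touched by the trade---namely $\{0,i,i+1\}$ in the first case and $\{0,1,x\}$ in the second---$A$ and $B$ agree, so any other index at which $A$ is nonzero lies automatically in $\supp(B)$ as well. Among the three touched coordinates, the two that the trade increments (positions $0$ and $i+1$, or $0$ and $1$) contribute to $\supp(A) \cap \supp(B)$ whenever $A$ was already nonzero there, and position $i$ is in $\supp(B)$ precisely when $A_i > p$.

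The only scenario that evades all of these conditions is when every coordinate of $A$ other than position $i$ vanishes and $A_i = p$ exactly. But then $A = (0,\ldots,0,p,0,\ldots,0)$ with the $p$ in position $i$, which forces $n = p \cdot (2(p^x-2)+p^{i-1})$, the exceptional value in the lemma. I don't anticipate a serious obstacle here: the lemma is really just packaging the statement that every trade in Theorem~\ref{MinPres} increases the $0$-coordinate, and the bookkeeping above is a finite case check. The only mild subtlety is peeling off the case $i = x$, where no $v_i$ is available and one must instead invoke the cyclic relation, and verifying that $2p-3 > 0$, which uses $p \ge 2$.
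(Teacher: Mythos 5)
Your proposal is correct and follows essentially the same route as the paper: the paper's proof applies exactly these two trades (the relation $v_i$ for $i \in [1,x-1]$, and the relation $p(2(p^x-2)+p^{x-1}) = (2p-3)(p^x-2) + 2(2(p^x-2)+1)$ for $i = x$), observes that each strictly increases the $0$-coordinate, and notes that $n \neq p(2(p^x-2)+p^{i-1})$ forces either $A_i \ge p+1$ or some other coordinate of $A$ to be nonzero, which is precisely the support-intersection case check you carry out more explicitly. The only difference is one of exposition: you spell out the $\mR$-class verification coordinate by coordinate, while the paper leaves it implicit.
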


\begin{proof}
Suppose that $n \neq  p(2(p^x-2) + p^{i-1})$ and that $A$ is a factorization of $n$ with $A_i \ge p$.  Since $n \neq p\cdot (2(p^x-2) + p^{i-1})$ either $A_i \ge p+1$ or $A_j \neq 0$ for some $j \neq i$.  

If $A_x \ge p$ then $A$ is in the same $\mR$-class as the factorization $(A_0 + 2p-3,A_1 +2,A_2,\ldots, A_{x-1}, A_x-p)$. If $A_i \ge p$ for some $i \in [1,x-1]$ then $A$ is in the same $\mR$-class as $(A_0 + 2p-2, A'_1,\ldots, A'_x)$ where $A'_i = A_i -p,\ A'_{i+1} = A_{i+1} + 1$, and $A'_j = A_j$ otherwise.  
\end{proof}
The following result shows that if a factorization contains too many copies of the smallest generator then the set of all factorizations of this element consists of a single $\mR$-class.

\begin{lemma}\label{Lem5}
Suppose that $A = (A_0,A_1,\ldots, A_x)$ is a factorization of $n \in S$ with $A_0 \ge 2x(p-1) -1$.  Then $\varphi^{-1}(n)$ consists of a single $\mR$-class, or $n = (2x(p-1) -1)(p^x-2)$.
\end{lemma}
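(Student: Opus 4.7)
The plan is to show that every factorization of $n$ lies in the $\mR$-class of $A$ by reducing an arbitrary factorization $B = (B_0, \ldots, B_x)$ to a \emph{terminal form} (one with $B_j < p$ for every $j \in [1, x]$) via iterated application of Lemma \ref{Lem4}, and then verifying that the resulting terminal factorizations are $\mR$-equivalent unless $n = (2x(p-1) - 1)(p^x - 2)$.

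First, I would argue that the reduction terminates. Whenever $B_j \ge p$ for some $j \in [1, x]$, Lemma \ref{Lem4} yields either an $\mR$-equivalent factorization with strictly larger $B_0$, or $n = p(2(p^x-2) + p^{j-1})$. In the latter case, Lemmas \ref{Lem2} and \ref{Lem3} describe every factorization of $n$ explicitly, and an inspection of their zeroth coordinates gives an upper bound of $\max(2p-2, 3)$. A direct check shows $\max(2p-2, 3) < 2x(p-1) - 1$ for all $(p, x)$ with $p, x \ge 2$ and $(p, x) \ne (2, 2)$, contradicting the hypothesis $A_0 \ge 2x(p-1) - 1$. Hence this alternative never arises and the reduction must terminate at a terminal factorization.

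Next, I would classify terminal factorizations. For such $B$, set $T := \sum_{j=1}^{x} B_j p^{j-1}$; then $T \equiv n \pmod{p^x - 2}$ and $0 \le T \le p^x - 1$. Writing $r = n \bmod (p^x - 2)$, the only possibilities are $T = r$, which I will call the \emph{small} terminal, and, only when $r \in \{0, 1\}$, also $T = r + (p^x - 2)$, the \emph{big} terminal. In each case the base-$p$ expansion of $T$ uniquely determines $B_1, \ldots, B_x$, and then $B_0$ is determined by $n$.

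Finally, I would show the two terminals are $\mR$-equivalent except in the stated case. If $r \ge 2$, only the small terminal exists. If $r = 1$, the small terminal has $B_1 = 1$ and the big terminal has $B_j = p - 1$ for every $j \in [1, x]$, so both are nonzero at index $1$ and are $\mR$-equivalent directly. If $r = 0$, the small terminal is $(n/(p^x-2), 0, \ldots, 0)$ while the big terminal has $B_0 = n/(p^x-2) - (2x(p-1)-1)$ and $B_j > 0$ for $j \in [2, x]$. These share index $0$ precisely when the big terminal's $B_0$ is positive, i.e., $n > (2x(p-1) - 1)(p^x - 2)$. Combined with the lower bound $n \ge (2x(p-1)-1)(p^x-2)$ forced by $A_0 \ge 2x(p-1) - 1$, the only failure occurs at $n = (2x(p-1)-1)(p^x-2)$, exactly as stated. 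The main technical point will be the careful bookkeeping in the $r = 0$ case to confirm the big terminal's $B_0$ formula and pinpoint the exceptional $n$.
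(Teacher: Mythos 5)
Your proof is correct, but it follows a genuinely different route from the paper's. The paper's argument is a two-line support-covering trick: since $A_0 \ge 2x(p-1)-1$, the single trade
\[
(2x(p-1)-1)(p^x-2) \;=\; (p-2)\bigl(2(p^x-2)+1\bigr) + (p-1)\sum_{i=2}^{x}\bigl(2(p^x-2)+p^{i-1}\bigr)
\]
can be applied to $A$ itself, producing $A' = (A_0-(2x(p-1)-1),\, A_1+p-2,\, A_2+p-1,\ldots, A_x+p-1)$ in the same $\mR$-class; then every factorization of $n$ meets $\supp(A)\cup\supp(A')$ (as $0\in\supp(A)$ and $\{2,\ldots,x\}\subseteq\supp(A')$), while $\supp(A)\cap\supp(A')\neq\emptyset$ unless $n=(2x(p-1)-1)(p^x-2)$, so $\varphi^{-1}(n)$ is a single $\mR$-class with no classification of factorizations needed. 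You instead use $A$ only to rule out $n=p(2(p^x-2)+p^{i-1})$ (your bound $\max(2p-2,3)<2x(p-1)-1$ is correct for all admissible $(p,x)$), reduce every factorization to a terminal form by iterating Lemma \ref{Lem4}, and classify the at most two terminals by base-$p$ digits --- in effect re-running the machinery the paper deploys for Lemmas \ref{Lem6} and \ref{Lem1}. Your route costs more work (termination, digit uniqueness, three residue cases) but buys a complete description of the reduced factorizations, isolates exactly where the hypothesis on $A_0$ enters, and treats $p=2$ uniformly; the paper's covering step, as written, silently needs an extra word when $p=2$, since then $A'_1=A_1$ and a factorization supported only at index $1$ is not obviously met by $\supp(A)\cup\supp(A')$, a wrinkle your case analysis never encounters. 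What the paper's route buys is brevity: because the hypothesis lets the key trade be performed on $A$ directly, no reduction process or enumeration of terminal forms is required.
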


\begin{proof}
As in the proof of the previous lemma, if $n \neq (2x(p-1) -1)(p^x-2)$ then either $A_0 > 2x(p-1)-1$ or there exists some $i \in [1,x]$ with $A_i > 0$.  Then 
\[
A' = (A_0 - (2x(p-1)-1), A_1 + p-2, A_2 + p-1,\ldots, A_x + p-1)
\] 
is another factorization in the same $\mR$-class as $A$.  Every factorization has support that intersects either $\supp(A)$ or $\supp(A')$.  For $n \neq (2x(p-1) -1)(p^x-2),\ \supp(A) \cap \supp(A') \neq \emptyset$ and these factorizations are in the same $\mR$-class.  Therefore, $\varphi^{-1}(n)$ consists of a single $\mR$-class.
\end{proof}

\begin{lemma}\label{Lem6}
The element $n = (2x(p-1) -1)(p^x-2)$ has exactly two factorizations and they are in different $\mR$-classes.
\end{lemma}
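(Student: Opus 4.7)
The plan is to exhibit two explicit factorizations $A$ and $B$ of $n$ with disjoint supports, and then show that no other factorization of $n$ exists; this disjointness automatically places $A$ and $B$ in different $\mR$-classes. I will take $A = (2x(p-1) - 1, 0, \ldots, 0)$ and $B = (0, p-2, p-1, \ldots, p-1)$. The check that each yields $n$ is a direct computation; for $B$ it relies on the identity
\[
(p-2) \cdot 1 + (p-1)(p + p^2 + \cdots + p^{x-1}) = p^x - 2,
\]
so that $\varphi(B) = 2(x(p-1) - 1)(p^x-2) + (p^x-2) = (2x(p-1)-1)(p^x-2) = n$, in the same spirit as the verifications in Lemmas~\ref{Lem2} and \ref{Lem3}.

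For uniqueness, let $C = (C_0, C_1, \ldots, C_x)$ be any factorization of $n$, and set $s = \sum_{j=1}^x C_j$ and $T = \sum_{j=1}^x C_j p^{j-1}$. The equation $n = (C_0 + 2s)(p^x - 2) + T$ forces $T \equiv 0 \pmod{p^x - 2}$, so $T = k(p^x - 2)$ for some integer $k \ge 0$ and $C_0 + 2s + k = 2x(p-1) - 1$. The cases $k = 0$ and $k = 1$ are straightforward: $k = 0$ gives $T = 0$, $s = 0$, and $C = A$; for $k = 1$, $T = p^x - 2$ and the base-$p$ representation $(p-2, p-1, \ldots, p-1)$ is the unique one with digits at most $p-1$, having $s = x(p-1) - 1$. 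Any other nonnegative integer representation is obtained from it by reversing a carry ($C_{j+1} \to C_{j+1} - 1,\ C_j \to C_j + p$), which strictly increases $s$ by $p-1$; combined with the bound $s \le x(p-1) - 1$ from $C_0 \ge 0$, this forces $C = B$.

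The hard part will be ruling out $k \ge 2$. My approach is to lower bound the minimum digit sum $s_{\min}$ over all nonnegative integer representations of $T = k(p^x - 2)$ as $\sum_{j=1}^x C_j p^{j-1}$. Because the carry $C_j \to C_j - p,\ C_{j+1} \to C_{j+1} + 1$ (for $j < x$) preserves $T$ and strictly decreases $s$ by $p - 1$, the minimum is attained by the unique fully carried representation, in which $C_j \in \{0, 1, \ldots, p-1\}$ for $j < x$ and $C_x = \lfloor T/p^{x-1} \rfloor$. Thus $s_{\min} = \text{ds}_p(T \bmod p^{x-1}) + \lfloor T/p^{x-1} \rfloor$. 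Using the identity $\text{ds}_p(p^N - m) = N(p-1) - \text{ds}_p(m-1)$ for $1 \le m < p^N$, together with $\text{ds}_p(m) \le m$ (and the sharper $\text{ds}_2(2k-1) = \text{ds}_2(k-1) + 1$ when $p = 2$), I expect to verify that $s_{\min} \ge x(p-1)$ for every $k \ge 2$. This contradicts the upper bound $s \le x(p-1) - 2$ forced by $C_0 \ge 0$ and $k \ge 2$, ruling out this case. The main technical subtlety will be the regime $2k > p^{x-1}$, where the fully carried form has $C_x \ge p$ and the min-sum formula must be re-derived; this regime arises only for a short list of pairs $(p, x)$, namely $\{(2,3),(3,2),(4,2),(5,2),(6,2)\}$, and in each such case the bound $s_{\min} \ge x(p-1)$ follows either from the formula above or from the cruder estimate $s \ge T/p^{x-1}$ combined with direct inspection.
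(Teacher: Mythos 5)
Your proof is correct, and it takes a genuinely different route from the paper's. The paper also starts from the congruence $\sum_{i=1}^x A_i p^{i-1} \equiv 0 \pmod{p^x-2}$, but then splits on whether every $A_i$ with $i \in [1,x]$ is less than $p$: if so, the digit vector must be $(0,\ldots,0)$ or $(p-2,p-1,\ldots,p-1)$, giving the two listed factorizations; if some $A_i \ge p$, the paper iterates the explicit trades from Lemma \ref{Lem4}, each of which strictly increases the number of copies of the smallest generator, and derives a contradiction when this process terminates (the terminal factorization would need all coordinates in positions $1,\ldots,x$ equal to zero, which the final trade makes impossible). You instead parametrize factorizations by $k$ with $\sum_{j} C_j p^{j-1} = k(p^x-2)$, extract the length identity $C_0 + 2s + k = 2x(p-1)-1$, and kill $k \ge 2$ by a quantitative lower bound on the minimal digit sum of $k(p^x-2)$. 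Your carries are exactly the digit part of the paper's trades, but you apply them to abstract representations of $T$ rather than to factorizations, and you close with an inequality rather than a termination argument. The paper's route is shorter and avoids all numerical case analysis, since Lemma \ref{Lem4} is already in hand; yours is self-contained (Lemma \ref{Lem4} is never used) and gives sharper information --- it determines exactly which multiples of $p^x-2$ are compatible with a given length budget, and would reprove Lemma \ref{Lem2} essentially for free. For the part you flagged as still to be verified: in the regime $2k \le p^{x-1}$, your identities reduce $s_{\min} \ge x(p-1)$ to $\mathrm{ds}_p(2k-1) \le (k-1)p$, which does hold --- for $p \ge 3$ because $\mathrm{ds}_p(m) \le m$ and $2k-1 \le (k-1)p$ when $k \ge 2$, and for $p=2$ because $\mathrm{ds}_2(2k-1) = \mathrm{ds}_2(k-1)+1 \le k \le 2k-2$.

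One inaccuracy should be fixed, though it is not fatal. The exceptional regime $2k > p^{x-1}$ is \emph{not} confined to the pairs $(2,3),(3,2),(4,2),(5,2),(6,2)$: it occurs for every $(p,x)$ once $k$ is large enough, since $k$ is a priori only bounded by $2x(p-1)-3$. What saves you is that this entire regime is handled uniformly by your cruder estimate, with no inspection of special pairs: from $s \ge T/p^{x-1}$ one gets $2s+k \ge k\left(2p+1-4/p^{x-1}\right)$, and combining this with $k \ge (p^{x-1}+1)/2$ yields $2s+k \ge 2x(p-1)$ for every admissible $(p,x) \neq (2,2)$ (the inequality reduces to $(P+1)\left((2p+1)P-4\right) \ge 4x(p-1)P$ with $P = p^{x-1}$, which is immediate for $x=2$, for $p=2$ with $x \ge 3$, and for $p,x\ge 3$). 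This contradicts $2s+k \le 2x(p-1)-1$, so the finite list should simply be deleted and replaced by this one estimate.
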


\begin{proof}
Suppose $n = (2x(p-1) -1)(p^x-2)$ and that $A = (A_0,\ldots, A_x)$ is a factorization of $n$.  Since 
\[
n = A_0 (p^x-2) + \left(\sum_{i=1}^x A_i\right) (p^x-2) +  \left(\sum_{i=1}^x A_i p^{i-1}\right),
\]
we see that $\sum_{i=1}^x A_i p^{i-1} \equiv 0 \pmod{p^x-2}$.  First suppose that for each $i \in [1,x]$ that $A_i < p$.  Then either $A_i = 0$ for all $i \in [1,x]$ or $A_1 = p-2$ and $A_2,\ldots, A_x = p-1$.  In the first case we get the factorization $(2x(p-1)-1,0,\ldots, 0)$ and in the second case we get $(0,p-2,p-1,\ldots, p-1)$.

Now suppose that there is some other factorization $(B_0,B_1,\ldots, B_x)$ where $B_i \ge p$ for some $i \in [1,x]$.  The proof of Lemma \ref{Lem4} shows that we can exchange $p$ copies of the generator $2(p^x-2) + p^{i-1}$ to get another factorization $C = (C_0,\ldots, C_x)$ with $C_0 > B_0$. If there is an $i \in [1,x]$ with $C_i \ge p$ we can again trade $p$ copies of a single generator to find another factorization with a larger number of copies of the first generator.  Eventually this process terminates since we have a larger number of copies of the first generator each time.  When it does, we get a factorization $D = (D_0,D_1,\ldots, D_x)$ that must have $D_j = 0$ for all $j \in [1,x]$ as $\sum_{j=1}^x D_j p^{j-1} \equiv 0 \pmod{p^x-2},\ D_j < p$ for each $j \in [1,x]$, and $D_0 > 0$.  However since we traded $p$ copies of a single generator $2(p^x-2)+p^{i-1}$ to go from the previous factorization to $D$, we see that 
\[
p\cdot (2(p^x-2)+p^{i-1}) \equiv p^i  \equiv 0 \pmod{p^x-2},
\]
which is impossible. Therefore, the only two factorizations are those listed above.
\end{proof}

We have characterized the set of factorizations of a special set of $x+1$ elements of $S$ and now show that the set of factorizations of any other element form a single $\mR$-class. This proves that we have found the Betti elements of $S$, which completes the proof of Theorem \ref{MinPres}.
\begin{proof}[Proof of Lemma \ref{Lem1}]
Suppose that $n \in S$ has at least two $\mR$-classes and that $n$ is not equal to $(2x(p-1)-1)(p^x-2)$ or $p(2(p^x-2)+p^{i-1})$ for any $i\in [1,x]$.

Suppose that $A = (A_0,A_1,\ldots, A_x)$ and $B= (B_0,B_1,\ldots, B_x)$ are factorizations of $n$ in distinct $\mR$-classes.  Then $A$ and $B$ have disjoint support.  We replace $A$ and $B$ with the factorizations in their $\mR$-classes with the largest values of $A_0$ and $B_0$.  Without loss of generality we can suppose that $B_0 = 0$.  By Lemma \ref{Lem4} we can suppose that $A_i, B_i < p$ for each $i \in [1,x]$. By Lemma \ref{Lem5} we can suppose that $A_0 < 2x(p-1)-1$.

Since $A$ and $B$ are factorizations of the same element and $B_0 = 0$, we have
\[
\left(A_0 + 2 \sum_{i=1}^x A_i\right) (p^x-2) + \sum_{i=1}^x A_i p^{i-1} = \left(2 \sum_{i=1}^x B_i\right) (p^x-2) + \sum_{i=1}^x B_i p^{i-1}.
\] 
This implies 
\[
\sum_{i=1}^x A_i p^{i-1} \equiv \sum_{i=1}^x B_i p^{i-1} \pmod{p^x-2}.
\]
Every integer $k \in [0,p^x - 1]$ can be written uniquely as 
\[
k = \sum_{i=1}^x a_i p^{i-1},
\]
where $0\le a_i < p$. The only way for $\sum_{i=1}^x B_i p^{i-1}$ to be at least $p^x-2$ is when $(B_1,\ldots, B_x) = (p-2,p-1,\ldots, p-1)$, or $(p-1,\ldots, p-1)$.  In the first case $\sum_{i=1}^x B_i p^{i-1}$ is congruent to $0$ modulo $p^x-2$, and in the second case it is congruent to $1$.  In both of these special cases the factorization $B$ is in the same $\mR$-class as a factorization with a larger number of copies of the smallest generator, which is a contradiction.  If $\sum_{i=1}^x B_i p^{i-1} \not\equiv 0,1 \pmod{p^x-2}$ then the values of $(B_1,\ldots, B_x)$ are completely determined and must be equal to $(A_1,\ldots, A_x)$, contradicting the assumption that these factorizations have disjoint support.
\end{proof}

Lemma \ref{Lem1} together with the complete set of factorizations of each of these elements proves Theorem \ref{MinPres}.  Now that we have a more detailed understanding of the factorizations of elements of $S$ we can easily compute $\Delta(S)$.

\begin{thm}\label{TwoEl}
Let $S$ be as in the statement of Theorem \ref{MinPres}.  Then 
\[
\Delta(S) = \{p-1,(p-1)x\}.
\] 
\end{thm}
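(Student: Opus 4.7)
The plan is to establish both inclusions of $\Delta(S)=\{p-1,(p-1)x\}$.

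For $\{p-1,(p-1)x\}\subseteq\Delta(S)$: first, $p-1\in\Delta(S)$ follows from Proposition \ref{MinD}, since the consecutive generator differences $p^x-1,\,p-1,\,p(p-1),\ldots,\,p^{x-2}(p-1)$ have $\gcd$ equal to $p-1$ (using $(p-1)\mid(p^x-1)$). Second, $(p-1)x\in\Delta(S)$ follows from Lemma \ref{Lem6}: the Betti element $(2x(p-1)-1)(p^x-2)$ has exactly two factorizations, of lengths $2x(p-1)-1$ and $(p-2)+(x-1)(p-1)=x(p-1)-1$, whose difference is $x(p-1)$.

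For the reverse inclusion $\Delta(S)\subseteq\{p-1,(p-1)x\}$, I would reparametrize each factorization $(A_0,\ldots,A_x)$ of $n$ by $r=\sum_{i=1}^x A_i p^{i-1}$ and $s=\sum_{i=1}^x A_i$, so that $r\equiv n\pmod{n_0}$ for $n_0:=p^x-2$, the length equals $L=(n-r)/n_0-s$, and $A_0\geq 0$ becomes $s\leq(M-k)/2$ with $k:=(r-r_0)/n_0$, $r_0:=n\bmod n_0$, $M:=(n-r_0)/n_0$. Since $p^{i-1}\equiv 1\pmod{p-1}$ gives $r\equiv s\pmod{p-1}$, every length in $\mathcal{L}(n)$ lies in a single residue class mod $p-1$, so every gap is a positive multiple of $p-1$. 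Writing $r_k:=r_0+kn_0=m\,p^{x-1}+r'$ with $0\leq r'<p^{x-1}$, the achievable $s$-values for fixed $k$ form the arithmetic progression $\{\sigma(r_k),\sigma(r_k)+(p-1),\ldots,r_k\}$ (truncated above by $\lfloor(M-k)/2\rfloor$), where $\sigma(r_k):=m+(\text{base-}p\text{ digit sum of }r')$. Hence $\mathcal{L}(n)$ is a union over $k$ of APs with common difference $p-1$.

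The crux of the proof, and the step I expect to be the main obstacle, is analyzing how these APs interlock. A direct base-$p$ subtraction argument should give $\sigma(r_k+n_0)-\sigma(r_k)=(j+1)(p-1)-1$, where $j$ is the borrow depth of $r'-2$ (so $j=0$ when the zeroth digit of $r'$ is at least $2$, and $j=x-1$ when $r'\in\{0,1\}$ forces the borrow to reduce $m$). Setting $T_k:=\{k+s:s\text{ valid}\}$ so that $\mathcal{L}(n)=\{M-t:t\in\bigcup_k T_k\}$, this yields $T_{k+1}^{\min}-T_k^{\min}=(j+1)(p-1)$. For an intermediate depth $j\in[1,x-2]$, the nonvanishing of the $j$th digit of $r'$ forces $r_k-\sigma(r_k)\geq p^j-1\geq j(p-1)$, so $T_k$ contains at least $j+1$ terms and $T_k^{\max}\geq T_{k+1}^{\min}-(p-1)$, closing any potential intermediate gap and leaving only a step of $p-1$. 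The delicate part is the $A_0\geq 0$ truncation on $T_k^{\max}$, which can shrink $|T_k|$ below the needed $j+1$; the plan there is to show that whenever this happens, the inequality $\sigma(r_{k+1})>\lfloor(M-k-1)/2\rfloor$ forces $T_{k+1}$ to be empty, and then strict monotonicity of $\sigma(r_k)$ in $k$ forces $T_{k'}=\emptyset$ for every $k'>k$, so $T_k^{\max}$ is already the maximum of $T$ and no gap above it survives. The two extremes $j=0$ (a step $p-1$ within $T_k$) and $j=x-1$ with $|T_k|=1$ (the $x(p-1)$ jump between $T_k$ and $T_{k+1}$) are then the only gaps appearing in $T$, completing the proof.
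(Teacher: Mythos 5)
Your proposal is correct in substance but takes a genuinely different route from the paper. The paper's proof of the hard inclusion is a local exchange argument: assuming two factorizations $A,B$ of the same element with consecutive lengths and $|A-B|=k(p-1)$, $k\in[2,x-1]$, it applies the trades of the minimal presentation from Theorem \ref{MinPres} to show that $B_i\ge p$ (any $i\ge 1$), $A_1\ge 2$, or $A_i\ge 1$ (any $i\ge 2$) each produce a factorization of intermediate length; the surviving configuration forces $B=(0,p-2,p-1,\ldots,p-1)$ and $A=(2x(p-1)-1,0,\ldots,0)$, hence $|A-B|=x(p-1)$, a contradiction. You instead describe $\mathcal{L}(n)$ globally, and your key claims all check out: the parametrization by $(k,s)$, the identification of the achievable $s$-values for fixed $k$ as the truncated progression starting at $\sigma(r_k)$ with difference $p-1$ and untruncated top $r_k$, the borrow formula $\sigma(r_{k+1})-\sigma(r_k)=(j+1)(p-1)-1$, the bound $r_k-\sigma(r_k)\ge p^j-1\ge j(p-1)$ when digit $j$ of $r'$ is nonzero, and the truncation dichotomy (from $s^*\le\sigma(r_k)+(j-1)(p-1)$ and $\lfloor(M-k)/2\rfloor\le s^*+p-2$ one gets $\sigma(r_{k+1})\ge\lfloor(M-k-1)/2\rfloor+(p-1)$, hence $T_{k+1}=\emptyset$). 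Your route also yields strictly more information than the paper's: it shows the gap $x(p-1)$ can occur only between the two largest lengths of $n$, and only when $n\bmod(p^x-2)\in\{0,1\}$, whereas the exchange argument merely excludes intermediate gap sizes. Two details need patching in a final write-up. First, $j=x-1$ does not imply $|T_k|=1$: if $r_k=mp^{x-1}+r'$ with $r'\in\{0,1\}$ and $m\ge 1$, the untruncated $T_k$ has at least $x$ terms (since $m(p^{x-1}-1)\ge(x-1)(p-1)$), so this subcase must be folded into your intermediate-depth argument, with the digit bound replaced by the bound coming from $m$; only $m=0$, which forces $k=0$ because $r_k\ge p^x-2\ge 2$ for $k\ge 1$, gives the singleton $T_0$ producing the $x(p-1)$ jump. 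Second, $\sigma(r_k)$ is only non-decreasing in $k$, not strictly increasing (for $p=2$ and $j=0$ the increment is zero), but non-strict monotonicity together with the fact that $\lfloor(M-k)/2\rfloor$ is non-increasing is all your emptiness-propagation step requires; similarly, once all $T_{k'}$ with $k'>k$ are empty, any element of $T$ exceeding $T_k^{\max}$ lies strictly above the minimum of whichever progression contains it, so it cannot be the top of a gap larger than $p-1$, which repairs the imprecise claim that $T_k^{\max}=\max T$.
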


We note that if $p=x=2$ then we get the non-minimal generating set for $S\ \{2,5,6\}$.  Following the conventions of \cite{CDHK}, the delta set of $\langle 2,5\rangle$ with respect to this generating set is $\{1,2\}$, which is consistent with Theorem \ref{TwoEl}.

\begin{proof}
By Proposition \ref{MinD} the minimum element of $\Delta(S)$ is equal to the greatest common divisor of the differences between consecutive minimal generators.  These differences are 
\[
\{p^x-1,p-1,p(p-1),\ldots, p^{x-2}(p-1)\},
\] 
a set with greatest common divisor equal to $p-1$.  Therefore, $\min \Delta(S) = p-1$.  The characterization of the factorizations of $(2x(p-1)-1)(p^x-2)$ given in Lemma \ref{Lem6} shows that $x(p-1)\in \Delta(S)$.

Therefore, if $A = (A_0,\ldots, A_x)$ and $B = (B_0,\ldots, B_x)$ are two factorizations of the same element of $S$, then 
\[
|A - B| := \left| \sum_{i=0}^x (A_i - B_i) \right| = k \cdot (p-1),
\]
for some $k \ge 0$.  Throughout the rest of the proof we suppose that $A$ and $B$ are two factorizations of the same element in $S$ with $\sum_{i=0}^x A_i > \sum_{i=0}^x B_i$ and that there are no factorizations with length in between these two.  We will show that if $|A-B| > p-1$ then $|A-B| = x(p-1)$, completing the proof.

We argue by contradiction.  Suppose that $|A-B| = k(p-1)$ with $k \in [2,x-1]$.  If such a pair of factorizations exists, then by canceling common elements there exists such a pair where for each $i$ either $A_i =0$ or $B_i = 0$.   We first show that we can make simplifying assumptions about the $A_i$ and $B_i$ by showing that if these assumptions are not satisfied then we can either find a factorization of length exactly $p-1$ longer than the length of $B$ or exactly $p-1$ shorter than the length of $A$.  

If $B_x \ge p$, then $(B_0 + 2p-3,B_1+2,B_2,\ldots, B_{x-1}, B_x-p)$, is a factorization of the same element with length exactly $p-1$ longer than the length of $B$, which is a contradiction.  If $B_i \ge p$ for some $i\in [1,x-1]$ then $(B_0 +2p-2,B'_1,B'_2,\ldots, B'_x),$ where $B'_i = B_i-p,\ B'_{i+1} = B_{i+1}+1$, and $B'_j = B_j$ otherwise, is a factorization of the same element with length exactly $p-1$ longer than the length of $B$, which is a contradiction.

We have
\[
(A_0-B_0)(p^x-2) + \sum_{i=1}^x (A_i - B_i) \left(2 (p^x-2) +p^{i-1}\right) = 0.
\]
This gives
\[
A_0 - B_0 + 2 \sum_{i=1}^x (A_i - B_i) + \frac{\sum_{i=1}^x (A_i - B_i)p^{i-1}}{p^x-2} = 0,
\]
which implies
\begin{equation}\label{Eqn4}
2k(p-1) + \frac{\sum_{i=1}^x (A_i - B_i)p^{i-1}}{p^x-2} = A_0 - B_0.
\end{equation}
Since $B_i \le p-1$ for $i\in [1,x]$ we have 
\[
 \frac{\sum_{i=1}^x (A_i - B_i)p^{i-1}}{p^x-2}  \ge \frac{-(p-1)(1+p+\cdots+p^{x-1}) }{p^x-2} = -1 - \frac{1}{p^x-2},
\]
which implies that this sum is at least $-1$, as it is an integer.  So by (\ref{Eqn4})
\[
2k(p-1) -1 \le A_0 - B_0 \le A_0.
\]  
Since $k \ge 2$ and $p\ge 2$ we see that $A_0 \ge 4p-5 \ge 2p-2$, and conclude that $B_0 = 0$ by cancellation.

Since $A_0 \ge 2p-2$, if $A_i \ge 1$ for any $i \in [2,x]$ then $(A_0 - 2(p-1),A'_1,\ldots, A'_x)$ where $A'_{i-1} = A_{i-1}+p,\ A'_i = A'_i -1$, and $A'_j = A_j$ otherwise, gives another factorization of the same element with length exactly $p-1$ shorter than the length of $A$, which is a contradiction.  If $A_1 \ge 2$ then $(A_0 - (2p-3),A_1 -2,A_3, \ldots, A_{x-1}, A_x+p)$ is a factorization of the same element with length exactly $p-1$ shorter than the length of $A$, which is a contradiction.  Therefore we can suppose that $A_1 \le 1$ and $A_i = 0$ for all $i \in [2,x]$.  

Note that since $B_0 = 0$,
\[
\sum_{i=0}^x (A_i - B_i) = A_ 0 + (A_1 - B_1) - \sum_{i=2}^x B_i = k(p-1),
\]
and 
\[
2k(p-1) +\frac{A_1-B_1 - \sum_{i=2}^x B_i p^{i-1}}{p^x-2} = A_0
\]
by (\ref{Eqn4}).

Since $p \ge 2$, $A_1 \le 1$, and at least one $B_i \ge 1$, in order for the fraction to be an integer we must have 
\[
A_1 - \sum_{i=1}^x B_i p^{i-1}= -(p^x-2) t
\]
for some positive integer $t$. Since each $B_i < p$ and $A_1\le 1$ we must have $t=1$. Since at least one of $A_1, B_1$ equals zero, we see that $A_1 = 0$ and
\[
(B_0, B_1, \ldots, B_x) = (0,p-2,p-1,\ldots, p-1).
\]
Since $A_i = 0$ for all $i \in [1,x]$ we have $A_0 = \frac{n}{p^x-2} = 2x(p-1)-1$.  This gives $|A-B| = (p-1)x$, which contradicts the assumption that $|A-B| < (p-1)x$. 
\end{proof}

\section{Two-Element Delta Sets of Semigroups with Embedding Dimension Three}\label{3gen}

In this section we characterize precisely which two-element sets occur as the delta set of some numerical semigroup of embedding dimension three.  By Proposition \ref{Geroldinger}, such a set must be of the form $\{d,td\}$ for some positive integers $d \ge 1$ and $t\ge 2$.  We show that $\{d,td\}$ occurs as a delta set if and only if $t = 2$.

\begin{thm}\label{no13}
Suppose that $S = \langle n_1, n_2, n_3\rangle$.  Let $d = \gcd\{n_3 - n_2, n_2 - n_1\}$.  Then $|\Delta(S)| = 2$ implies that $\Delta(S) = \{d,2d\}$.  
\end{thm}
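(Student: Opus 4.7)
The approach is proof by contradiction. By Propositions \ref{Geroldinger} and \ref{MinD}, $\min \Delta(S) = \gcd \Delta(S) = d$, so $|\Delta(S)| = 2$ forces $\Delta(S) = \{d, td\}$ for some $t \ge 2$. The plan is to assume $t \ge 3$ and produce a contradiction, from which $t = 2$ follows. Under this hypothesis there exist $x \in S$ and factorizations $A, B \in \varphi^{-1}(x)$ with $|B| - |A| = td$ and with $|A|, |B|$ consecutive in $\mathcal{L}(x)$. The goal is then to exhibit a third factorization of $x$ whose length lies strictly in the open interval $(|A|, |B|)$.

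The first step is a cancellation reduction. If any coordinate $i$ has $A_i, B_i > 0$, subtracting $\min(A_i, B_i)$ from both produces factorizations of a smaller element of $S$ with the same length difference and the same length-gap property; iterating, I reduce to $\supp(A) \cap \supp(B) = \emptyset$. Since $n_1 < n_2 < n_3$ and $|B| > |A|$, the longer factorization $B$ must concentrate on the smaller generators, so $(\supp A, \supp B)$ falls into a short list of cases, each with $\supp A \subseteq \{2,3\}$ and $\supp B \subseteq \{1,2\}$ (disjoint and nonempty).

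The second step uses the structure of a minimal presentation. In embedding dimension three, each minimal relation has the shape $c_i n_i = r_{ij} n_j + r_{ik} n_k$ with signed length change $\delta_i = r_{ij} + r_{ik} - c_i$; the Betti element $c_i n_i$ directly witnesses $|\delta_i| \in \Delta(S)$, so each nonzero $|\delta_i| \in \{d, td\}$. The key move is to apply one of these relations as a partial trade to $A$ or $B$: for example, running $R_1^{-1}$ on $A$ (length change $+|\delta_1|$) yields a new factorization of length $|A| + |\delta_1|$, and whenever $|\delta_1| = d$ this lies strictly inside $(|A|, |B|)$ because $t \ge 3$ gives $td > d$, producing the contradiction. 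When the single trade fails because a coordinate would go negative, I combine two or more relations, producing length changes of the form $\pm d \pm td$ whose absolute values land strictly between $0$ and $td$.

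The main obstacle is the bookkeeping in this case analysis: for each allowable support pattern one must rule out every single-trade and every compound-trade move that would create an intermediate factorization. The hypothesis that no such intermediate length exists imposes rigid constraints on $(A, B)$ (inequalities such as $B_1 < c_1$, $A_2 < r_{12}$, and restrictions on the coordinates of $A$ and $B$), and the crux of the argument is to show that these rigidity constraints are incompatible with $A$ and $B$ being factorizations of the same element $x$. The width of the interval $(|A|, |A| + td)$ under $t \ge 3$ is precisely what gives enough room for composite trades to always land intermediate, and together with the limited number of support configurations available in embedding dimension three, this yields the contradiction and forces $t = 2$.
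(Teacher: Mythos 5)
Your opening reduction (via Propositions \ref{Geroldinger} and \ref{MinD}) is correct, and minimal presentations are indeed the right tool, but the proposal has genuine gaps. First, your framework silently assumes the non-symmetric structure: three relations $c_i n_i = r_{ij}n_j + r_{ik}n_k$ whose length changes $|\delta_i|$ are witnessed as gaps by Betti elements with exactly two factorizations. When $S$ is symmetric, the minimal presentation has only \emph{two} relations, and the Betti element $a(bm_1+cm_2)$ (writing $S = \langle am_1, am_2, bm_1+cm_2\rangle$) can have many factorizations, whose lengths form $\{a\}$ together with an arithmetic progression of difference $m_2-m_1$; the relation's length change $|a-(b+c)|$ is then in general \emph{not} an element of $\Delta(S)$, so your key claim that ``each nonzero $|\delta_i| \in \{d,td\}$'' fails precisely in the case the paper must work hardest on. Second, your support reduction is false as stated: disjoint supports with $|B|>|A|$ do not force $\supp B \subseteq \{1,2\}$. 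In $S=\langle 3,5,7\rangle$ the element $25$ has the disjoint-support factorizations $(6,0,1)$ of length $7$ and $(0,5,0)$ of length $5$; the \emph{longer} one uses $n_3$. So your ``short list of cases'' is not the right list, and any corrected list would need a separate argument.

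Third, and most seriously, the crux of your plan --- showing that the rigidity constraints forced by the absence of intermediate lengths contradict $\varphi(A)=\varphi(B)$ --- is asserted but never carried out, and the feasibility of your compound trades (every intermediate step must keep all coordinates nonnegative) is exactly the hard combinatorial difficulty; this is essentially the route of \cite{GSLM}, which the paper notes requires significantly different methods. The paper's own proof never constructs intermediate factorizations of a general element. In the non-symmetric case it observes that $\delta_1,\delta_3$, and $\delta_2$ (if nonzero) all lie in $\Delta(S)$, that $\max\Delta(S)=\max\{\delta_1,\delta_3\}$ (Corollary 3.1 of \cite{Chap}), and that the three relation vectors sum to zero, giving the identity $\delta_2 = |\delta_1-\delta_3|$; then $\Delta(S)=\{d,td\}$ forces $\{\delta_1,\delta_3\}=\{d,td\}$ and hence $(t-1)d = \delta_2 \in \Delta(S)$, so $t=2$. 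In the symmetric case it analyzes the explicit set of lengths of $a(bm_1+cm_2)$ and of the auxiliary element $a(bm_1+cm_2)+am_1m_2$ to again force either $(t-1)d \in \Delta(S)$ or two sub-maximal gaps, yielding the same conclusion. To repair your proposal you would either have to supply the full trade-feasibility case analysis or switch to these Betti-element identities.
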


The main tool in this argument is a careful consideration of the minimal presentations of embedding dimension three numerical semigroups.  We follow the presentation in Section 4 of \cite{Chap}.  A numerical semigroup $S$ must have $|\N\setminus S|<\infty$.  The largest element of $\N \setminus S$ is called the \emph{Frobenius number}, and is denoted $F(S)$.  A semigroup $S$ is \emph{symmetric} if for each $i \in [1,F(S)]$ exactly one of $\{i, F(S)-i\}$ is in $S$.  There are two cases to consider based on whether $S$ is a symmetric or not.

\begin{proof}
Let $S = \langle n_1, n_2, n_3 \rangle$ with $n_1 < n_2 < n_3$ be a numerical semigroup of embedding dimension three that is not symmetric. We recall some facts from \cite{Chap} that are also covered in detail in Chapter 9 of \cite{GSR}. For $i \in [1,3]$ there exist positive integers $r_{ij}$ such that 
\[
c_i n_i = r_{ij} n_j + r_{ik} n_k,
\]
where $c_i = \min\{k\in \N \setminus \{0\}\ |\ kn_i \in \langle n_j,n_k\rangle\}$.  There is a unique minimal presentation of $S$ given by 
\[
\sigma = \{((c_1,0,0),(0,r_{12},r_{13})),((0,c_2,0),(r_{21},0,r_{23})),((0,0,c_3),(r_{31},r_{32},0))\}.
\]
We note that
\[
(c_1,-r_{12},-r_{13}) + (-r_{21},c_2,-r_{23}) + (-r_{31},-r_{32},c_3) = (0,0,0).
\]

The three elements $c_1 n_1, c_2 n_2, c_3 n_3$ are distinct and each has exactly two factorizations.  Let $\delta_1 = c_1 - (r_{12} + r_{13}),\ \delta_3 = (r_{31} + r_{32}) - c_3$, and $\delta_2 = |c_2 - (r_{21} + r_{23})|$.  Since $n_1 < n_2 < n_3$ we see that $\delta_1, \delta_3 > 0$, and that $\delta_1, \delta_3 \in \Delta(S)$, and $\delta_2\in \Delta(S)$ if it is nonzero.  Moreover, Corollary 3.1 of \cite{Chap} implies that $\max \Delta(S) = \max\{\delta_1,\delta_3\}$ and that each element of $\Delta(S)$ can be written as 
\[
\lambda_1 \delta_1 + \lambda_3 \delta_3
\]
for some $\lambda_1, \lambda_3 \in \Z$.  If $|\Delta(S)| > 1$ then $\delta_1 \neq \delta_3$.  We also have that $\delta_2 = |\delta_1 - \delta_3|$.  Suppose that $\Delta(S) = \{d,td\}$ with $t > 2$.  Then $\{\delta_1,\delta_3\} = \{d,td\}$ and $\delta_2 = (t-1)d$, which is a contradiction.

We now consider the case where $S$ is a symmetric numerical semigroup of embedding dimension three, closely following the presentation of Section 4.3 of \cite{Chap}.  Theorem 10.6 in \cite{GSR} implies that $S = \langle a m_1, a m_2, b m_1 + c m_2\rangle$ for some nonnegative integers $m_1,m_2, a,b,c$ satisfying $a,b+c \ge 2$ and $\gcd\{m_1, m_2\} = 1 = \gcd\{a,bm_1 + cm_2\}$.  Without loss of generality suppose $m_2 > m_1$.  Theorem 17 in \cite{GSO} implies that a minimal presentation of $S$ is 
\[
\sigma = \{((m_2,0,0),(0,m_1,0)), ((0,0,a),(b,c,0))\}.
\]
This presentation is not necessarily unique.

We see that the element $a m_1 m_2 \in S$ has exactly two factorizations, so $m_2 - m_1 \in \Delta(S)$.  Let $r = \left\lfloor \frac{c}{m_1} \right\rfloor$ and $s= \left\lfloor\frac{b}{m_2} \right\rfloor$.  In Section 4.3 of \cite{Chap} the authors show that the set of lengths of $a(bm_1 + c m_2)$ is given by
\begin{eqnarray*}
\{a,b+c - s(m_2-m_1),b+c - (s-1)(m_2-m_1),\ldots  \\
\ldots, b+c + (r-1) (m_2-m_1), b+c + r(m_2-m_1)\},
\end{eqnarray*}
and that $|\Delta(S)| = 1$ if and only if $a = b+c +k(m_2-m_1)$ for some integer $k \in [-s-1,r+1]$.  Assume that this is not the case.

Suppose that $\Delta(S) = \{d,td\}$ with $t > 2$.  By assumption, $a$ is not equal to $b+c + k(m_2 - m_1)$ for any $k \in [-s-1,r+1]$. If 
\[
b+c - s(m_2-m_1) < a < b+c + r(m_2-m_1),
\]
then there exists $k \in [-s,r-1]$ such that 
\[
b+c + k (m_2-m_1) < a < b+c + (k+1) (m_2-m_1).
\]  
Taking differences shows that 
\[
\left\{b+c + (k+1) (m_2-m_1) -a, a - (b+c +k(m_2-m_1)) \right\} \subseteq \Delta\left(a(bm_1 + c m_2)\right).
\] 
These elements are both smaller than $m_2-m_1$, so if $\Delta(S) = \{d,td\}$ then $m_2 - m_1 = td$.  However, if both of these elements are equal to $d$ then $t=2$, which is a contradiction.  Therefore, $|\Delta(S)| \ge 3$, which is a contradiction.

We now consider two final cases.  First suppose that $a < b+c - s(m_2-m_1)$ and recall that $a \neq b+c -(s+1) (m_2-m_1)$.  Let $d_1 = m_2 - m_1$ and $d_2 = b+c - s(m_2 - m_1) - a$, and note that $d_1, d_2 \in \Delta(S)$ and $d_1 \neq d_2$.  Now consider the set of lengths of the element $a(b m_1 + c m_2) + a m_1 m_2$.  This element has factorizations $(m_2, 0, a)$ and $(b - s m_2, c + sm_1 + m_1,0)$, and no factorizations of lengths in between.  We conclude that 
\[
\left|b+c - s(m_2 - m_1) - a - (m_2 - m_1)\right| = |d_2 - d_1| \in \Delta(a(b m_1 + c m_2) + a m_1 m_2).
\]
Since $\{d_1, d_2\} = \{d,td\}$ we conclude that $(t-1)d \in \Delta(S)$, which is a contradiction.

The other case to consider is when $a > b+c + r(m_2 - m_1)$. A version of exactly the same argument with $-s$ replaced by $r$ again shows that $(t-1)d\in \Delta(S)$.  This completes the proof.
\end{proof}

We now show that for each $d \ge 1$, there is a numerical semigroup $S = \langle n_1, n_2, n_3\rangle$ with $\Delta(S) = \{d,2d\}$.  In fact, there is a symmetric semigroup of this type.
\begin{prop}
Let $d\ge 1$ be a positive integer and $p$ be an odd prime that does not divide $d$.  Let $S = \left\langle p^2, p^2+2d, p^2 - (p-2)d \right\rangle$.  Then $\Delta(S) = \{d,2d\}$.
\end{prop}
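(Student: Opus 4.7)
My plan is to combine Proposition~\ref{MinD}, Theorem~\ref{no13}, and an explicit Betti-element analysis in the style of the proof of Theorem~\ref{no13}.

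I would first observe that the consecutive differences of the generators are $n_2-n_1=(p-2)d$ and $n_3-n_2=2d$, and use that $p$ is odd to conclude $\gcd(p-2,2)=1$. Proposition~\ref{MinD} then gives $\min\Delta(S)=d$, so every element of $\Delta(S)$ is a positive multiple of $d$; in particular $d\in\Delta(S)$.

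Next I would verify the algebraic identity
\[
p\cdot p^2 \;=\; 2\bigl(p^2-(p-2)d\bigr) + (p-2)\bigl(p^2+2d\bigr),
\]
which exhibits $p\cdot n_2$ with two disjointly supported factorizations each of length $p$. In the notation of the proof of Theorem~\ref{no13} this determines $c_2=p$, $r_{21}=2$, $r_{23}=p-2$, and hence $\delta_2=0$. I would then compute $c_1$ and $c_3$ (the minimal multipliers for which $c_i n_i$ lands in the sub-semigroup generated by the other two generators) together with the corresponding $r_{ij}$. The plan is to argue by reduction modulo $p$, using that $\gcd(p,2d)=1$: any smaller candidate for $c_1$ or $c_3$ would produce a nonnegative combination of $p^2-(p-2)d$ and $p^2\pm 2d$ incompatible with that coprimality. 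The target outcome is $\{\delta_1,\delta_3\}=\{d,2d\}$, so that the Betti element witnessing the larger $\delta$ has two factorizations whose lengths differ by $2d$, placing $2d$ in $\Delta(S)$.

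Finally, Corollary~3.1 of \cite{Chap}, as applied in the proof of Theorem~\ref{no13}, gives $\max\Delta(S)=\max\{\delta_1,\delta_3\}=2d$ and confines $\Delta(S)$ to $\delta_1\mathbb{Z}+\delta_3\mathbb{Z}=d\mathbb{Z}$. Combined with $\min\Delta(S)=d$ this forces $\Delta(S)\subseteq\{d,2d\}$, and since both values are attained one concludes $\Delta(S)=\{d,2d\}$. Alternatively, once one has shown both $d,2d\in\Delta(S)$ and that $|\Delta(S)|\le 2$ via the Chapman bound, Theorem~\ref{no13} immediately identifies the two-element set. The main technical obstacle is the minimality step for $c_1$ and $c_3$: without the prime hypothesis on $p$, smaller combinations could exist that would collapse $\delta_1$ or $\delta_3$ back down to $d$, destroying the $2d$ element, so it is precisely here that the assumptions $p$ prime and $p\nmid d$ must be used.
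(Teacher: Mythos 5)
Your opening step is fine and agrees with the paper: by Proposition \ref{MinD}, $\min\Delta(S)=\gcd\Delta(S)=d$. The gap is in your central step, and it is not repairable. You work in the non-symmetric framework from the proof of Theorem \ref{no13}, where the three relations satisfy $(c_1,-r_{12},-r_{13})+(-r_{21},c_2,-r_{23})+(-r_{31},-r_{32},c_3)=(0,0,0)$ and hence $\delta_2=|\delta_1-\delta_3|$. Your identity $p\cdot p^2=2\bigl(p^2-(p-2)d\bigr)+(p-2)\bigl(p^2+2d\bigr)$ is algebraically correct, but it works against you: once $\delta_2=0$, that framework forces $\delta_1=\delta_3$, and then Corollary 3.1 of \cite{Chap} says every element of $\Delta(S)$ is of the form $\lambda_1\delta_1+\lambda_3\delta_3$, i.e.\ a multiple of $\delta_1$, bounded above by $\max\{\delta_1,\delta_3\}=\delta_1$; so $\Delta(S)=\{\delta_1\}$ is a singleton. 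Your ``target outcome'' $\{\delta_1,\delta_3\}=\{d,2d\}$ can therefore never materialize. In fact the statement as literally printed is false: for $p=3$, $d=1$ it asserts $\Delta(\langle 8,9,11\rangle)=\{1,2\}$, but $\langle 8,9,11\rangle$ is not symmetric, its relations are $5\cdot 8=2\cdot 9+2\cdot 11$, $3\cdot 9=2\cdot 8+11$, and $3\cdot 11=3\cdot 8+9$, so $\delta_1=\delta_3=1$, $\delta_2=0$, and $\Delta(\langle 8,9,11\rangle)=\{1\}$.

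The explanation is a typo that the paper's own proof exposes. The proof sets $a=m_1=p$, $m_2=p+2d$, $b=p-d-1$, $c=1$ and works with $\langle am_1,am_2,bm_1+cm_2\rangle=\langle p^2,\,p(p+2d),\,p^2-(p-2)d\rangle$; the intended middle generator is $p^2+2pd$, not $p^2+2d$. That semigroup is symmetric (for $p=3$, $d=1$ it is $\langle 8,9,15\rangle$, which does have delta set $\{1,2\}$), and the paper's argument is the symmetric-case one, never touching the $c_i$, $r_{ij}$ machinery you rely on: Theorem 17 of \cite{GSO} gives the unique two-element minimal presentation, the Betti elements $am_1m_2$ and $a(bm_1+cm_2)$ contribute $m_2-m_1=2d$ and $a-(b+c)=d$ to $\Delta(S)$, and $\max\Delta(S)=\max\{2d,d\}=2d$ together with $\min\Delta(S)=\gcd\Delta(S)=d$ forces $\Delta(S)=\{d,2d\}$. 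Note that your plan also cannot simply be transplanted to the corrected generators: you never check whether $S$ is symmetric, and for the corrected (symmetric) semigroup the three elements $c_in_i$ are not distinct and some $r_{ij}$ vanish, so ``Corollary 3.1 as applied in the proof of Theorem \ref{no13}'' (its non-symmetric case) is unavailable; one must argue through the symmetric case, exactly as the paper does.
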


\begin{proof}
We verify that $S$ is a symmetric numerical semigroup. It is of the form $\langle am_1, am_2, b m_1 + c m_2\rangle$ with $a= m_1 = p,\ m_2 = p+2d,\ b = p-d-1$, and $c=1$.  Clearly $\gcd\{p, p+2d\} = 1$ since $\gcd\{p,2d\} = 1$.  Also, $\gcd\{p,p^2 -(p-2)d\} = 1$ since $\gcd\{p,(p-2)d\} = 1$.

We compute
\begin{eqnarray*}
\min \Delta(S) &  =  & \gcd \Delta(S) =  \gcd\{p^2+2d - p^2, p^2 - (p^2-(p-2)d) \} \\
& = &  \gcd\{2d, (p-2)d\} = d,
\end{eqnarray*}
since $p$ is an odd prime.  By Theorem 17 of \cite{GSO}, this semigroup has a unique minimal presentation since $0 < b < m_2$ and $0 < c < m_1$.  The Betti elements are $m_1 m_2$ and $b m_1 + c m_2$, which give delta set elements $m_2 - m_1$ and $a-(b+c)$. We see that 
\[
\max \Delta(S) = \max\{m_2 - m_1, a- (b+c)\} = \max\{2d,d\},
\] 
completing the proof.
\end{proof}

\section{Further Questions}

In this section we first suggest many other classes of semigroups with interesting delta sets  that we can describe explicitly.  We then give two related realization problems for numerical semigroups. Extensive computation suggests many other statements analogous to Theorem \ref{TwoEl}.

\begin{conj} Let m $\geq$ 1 be a positive integer and k $\geq$  0 be nonnegative integer.  Let 
\[
S = \langle 3 \cdot 2^{m+k} - 2^m, 2( 3 \cdot 2^{m+k}-2^m)+1,\ldots,2(3 \cdot 2^{m+k}-2^m)+2^m \rangle.
\]
Then $S$ is a complete-intersection numerical semigroup with minimal presentation given by 
\[
((3 \cdot 2^{k+1}-1,0,\ldots,0),(0,\ldots,0,3 \cdot 2^k-1))
\]
and for each $i \in [1,m]$ 
\[
v_i := ((0,\ldots,0,2,0,\ldots,0),(2,0,\ldots,0,1,0,\ldots,0))
\]
where the first $2$ is in the $i+1$ position, where we count starting at $0$, and the $1$ is in the $i+2$ position.  We also have that 
\[
\Delta(S) = \begin{cases}
\{1,\ldots,3 \cdot 2^{k}\} & \text{ if }   m=1,\\ 
\{1, \ldots,3 \cdot 2^{k}\} \setminus (\{3 \cdot 2^{k}+1-3k | k \in \N \} \setminus{1}) & \text{ if }  m=2, \\ 
\{1, \ldots, 3 \cdot 2^{k} \} \setminus (\{ 3 \cdot 2^{k} + n - 7k | k \in \N, n=1,2,5 \}\setminus{1}) & \text{ if }  m \geq 3. 
\end{cases}
\]
\end{conj}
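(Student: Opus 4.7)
The plan is to mirror the proof structure of Theorems~\ref{MinPres} and~\ref{TwoEl}. Write $n_0 = 2^m(3 \cdot 2^k - 1)$ for the smallest generator and $g_i = 2n_0 + 2^i$ for $i = 0, 1, \ldots, m$ for the others; with this labeling the $v_i$'s in the statement are naturally indexed by $i \in [0,m-1]$. Minimality of the generating set follows by a residue argument modulo $n_0$: each $g_i$ lies in $[2n_0, 3n_0)$ and the residues $2^i \bmod n_0$ are distinct for $0 \le i \le m$. The claimed relations are checked by short computations: the first encodes $(3 \cdot 2^{k+1}-1) n_0 = (3 \cdot 2^k - 1) g_m$, both equal to $2^m(3 \cdot 2^k - 1)(3 \cdot 2^{k+1}-1)$, and each $v_i$ encodes the binary doubling trade $2 g_i = 2 n_0 + g_{i+1}$. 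This gives $m+1$ relations for a semigroup of embedding dimension $m+2$, matching the complete-intersection bound $e - 1$.

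To verify that this is a minimal presentation, the next step is to exhibit all factorizations of the candidate Betti elements $(3 \cdot 2^{k+1}-1) n_0$ and $2 g_i$ for $0 \le i \le m-1$, following the pattern of Lemmas~\ref{Lem1}--\ref{Lem6}. For each, one reduces a factorization modulo $n_0$ using $g_j \equiv 2^j \pmod{n_0}$, then appeals to the uniqueness of base-$2$ expansions of integers in $[0,n_0)$ to enumerate cases; the $p = 2$ specialization of Theorem~\ref{MinPres} (compare Lemma~\ref{Lem3}) means that extra factorizations may appear and must be cataloged. The analogues of Lemmas~\ref{Lem4}--\ref{Lem5} then reduce any factorization either to one with many copies of $n_0$ or to one among a controlled list, which together show that each Betti element has exactly two $\mR$-classes and no other element is Betti.

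The delta set computation is the main task. By Proposition~\ref{MinD}, $\min \Delta(S) = \gcd\{n_0, 1, 2, 4, \ldots, 2^{m-1}\} = 1$, and the first Betti element supplies the maximum $3 \cdot 2^k$ as in Theorem~\ref{TwoEl}. For the positive direction, one constructs explicit witnesses: iterated applications of the first relation shift factorization lengths by $3 \cdot 2^k$, while controlled uses of the $v_i$'s shift by $1$, so combinations produce every length in the claimed delta set. For the negative direction, given a hypothetical factorization pair $A, B$ of a common element witnessing a forbidden gap, reduce to the case of disjoint supports with $A_0 < 3 \cdot 2^{k+1} - 1$ and $A_i, B_i < 2$ for $i \ge 1$ (analogues of Lemmas~\ref{Lem4}--\ref{Lem5}), and then use the congruence
\[
\sum_{i=0}^{m} (A_{i+1} - B_{i+1})\, 2^i \equiv 0 \pmod{n_0}
\]
to pin down $(A_i - B_i)$.

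The main obstacle is the negative direction in the cases $m = 2$ and $m \ge 3$. The excluded values form arithmetic-progression families indexed by $k'$, with common difference $3$ for $m = 2$ and $7$ for $m \ge 3$, so presumably the proof is by induction on $k'$ with a base case handled via the Chapman--Hoyer--Kaplan bound (Corollary~3 in~\cite{CHK}) or its sharpening in~\cite{GMV}. The key combinatorial insight is that each application of a $v_i$ effectively carries a binary digit from position $i$ to position $i+1$ while adding $2$ to position $0$; sequences of such carries cycle through residues modulo $n_0$ in patterns whose length ($3$ for $m = 2$, $7$ for $m \ge 3$) reflects how many trades are simultaneously available without invoking the first relation, and the specific offsets $n \in \{1,2,5\}$ record which length-$1$ trades are realizable starting from different residue classes. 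Making this heuristic rigorous, and verifying stabilization at $m = 3$, is where the bulk of the technical difficulty lies.
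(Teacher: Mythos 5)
This statement is a \emph{conjecture} in the paper: the authors prove nothing about it beyond the one-line remark that the generating set is minimal (by the argument of Lemma \ref{Lem1}), so your proposal cannot lean on a hidden paper proof and must stand alone --- and it does not. The parts you actually verify are correct but are the easy parts: writing $n_0 = 2^m(3\cdot 2^k-1)$ and $g_i = 2n_0+2^i$, the identities $(3\cdot 2^{k+1}-1)\,n_0 = (3\cdot 2^k-1)\,g_m$ and $2g_i = 2n_0+g_{i+1}$, the minimality of the generating set (with the indexing of the $v_i$ corrected), the count of $m+1 = e-1$ relations, and $\min\Delta(S)=1$. Everything after that is a plan rather than a proof, and the plan has concrete holes.

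First, exhibiting $e-1$ valid relations does not make $S$ a complete intersection; you must show these relations generate the kernel congruence, and your appeal to analogues of Lemmas \ref{Lem2}--\ref{Lem6} does not transfer verbatim. In Theorem \ref{MinPres} the digits $p^{j-1}$ range up to roughly the modulus $p^x-2$, so every residue class modulo $p^x-2$ is representable with small digits, which is what drives the case analysis there; here the digits $2^0,\ldots,2^m$ reach only $2^{m+1}-1$, smaller than the modulus $n_0$ by a factor of about $3\cdot 2^k$ once $k\ge 1$, so ``uniqueness of base-$2$ expansions in $[0,n_0)$'' is not the relevant statement and the factorization analysis must be redone from scratch. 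Second, for the positive direction of the delta set claim, exhibiting trades that change length by $3\cdot 2^k$ or by $1$ does not show that a given value $j$ lies in $\Delta(S)$: you need an element two of whose \emph{consecutive} lengths differ by exactly $j$, i.e., you must also rule out intermediate lengths, which is precisely the hard step in the proof of Theorem \ref{TwoEl}. Third, the heart of the conjecture is the exact set of excluded values --- $3\cdot 2^k+1-3k'$ for $m=2$, and $3\cdot 2^k+n-7k'$ with $n\in\{1,2,5\}$ for $m\ge 3$, together with the claimed stabilization in $m$ --- and for this you offer only the carry/cycle heuristic, which you yourself flag as non-rigorous; no mechanism is given that would produce the periods $3$ and $7$ or the offsets $\{1,2,5\}$, and Corollary 3 of \cite{CHK} only bounds the search space for a \emph{fixed} $(m,k)$, so it cannot by itself anchor an induction over the infinite family. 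As it stands, the proposal is a reasonable research outline in the spirit of Section 2 of the paper, but it proves none of the three assertions (complete intersection, minimal presentation, delta set formula) that make up the conjecture.
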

Using the same argument as in the proof of Lemma \ref{Lem1}, we see the given generating set of $S$ is always minimal as $2^i-2^j<3\cdot2^{m+k}-2^m$ for the given parameters.

It would be very interesting to compare this class of complete intersection numerical semigroups to those given in \cite{AGS, DMS}. Slight variations of the semigroups given above seem to give interesting delta sets.

\begin{conj}\label{Con3}
For $x \ge 2$ if $S =  \langle 2^x, 2 \cdot 2^x +1,\ldots,2 \cdot 2^x + 2^{x-1} \rangle$, then 
\[ 
\Delta(S) = \{1,2,3\}.
\]
For $x \ge 3 $ if $ S = \langle 2^{x-1}-1, 2(2^{x-1}-1)+1,\ldots,2(2^{x-1}-1)+2^{x-2}\rangle $, then 
\[ 
\Delta(S) = \{1,2,x\}.
\]
For $x \ge 2 $, if $ S = \langle 2^{x+1} - 3, 2(2^{x+1}-3)+1,\ldots,2(2^{x+1}-3)+2^x \rangle $, then 
\[ 
\Delta(S) = \{1,x,x+1\}. 
\]

For $c \ge 4 $ and $x \ge 2$, if 
\[
S = \langle 2^{x+c-3}-c,2(2^{x+c-3}-c)+1,\ldots,2(2^{x+c-3}-c)+2^{x+c-5} \rangle,
\] 
then 
\[ 
\Delta(S) = \{1,x+i_0,x+i_1,\ldots, x + i_{ \left \lfloor (c-1)/2 \right \rfloor} \}, 
\]
where $i_0 = 0 $, and $i_j = 
\begin{cases} 
i_{j-1}+1 & \text{ if } j \equiv 1 \pmod{2} \\
i_{j-1}+2 & \text{ if } j \equiv 2 \pmod{4} \\
i_{j-1}+3 & \text{ if } j \equiv 0 \pmod{4} \\
\end{cases} $.
\end{conj}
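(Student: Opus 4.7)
The plan is to extend the machinery developed for Theorem \ref{MinPres} and Theorem \ref{TwoEl} to each of the four families, since all of them have the same underlying shape $\langle n, 2n+1, 2n+2, 2n+4, \ldots, 2n+2^{k} \rangle$ with different choices of $n$ (respectively $n = 2^x$, $n = 2^{x-1}-1$, $n = 2^{x+1}-3$, and $n = 2^{x+c-3}-c$). For each case I would proceed in the order: (i) verify minimality of the generating set, (ii) identify the Betti elements and enumerate their factorizations, (iii) show every other element has factorizations forming a single $\mR$-class, and (iv) read off $\Delta(S)$ from the length-sets at the Betti elements. Minimality in (i) follows verbatim from the argument just after Lemma \ref{Lem1}: any relation among the $2n+2^{j}$'s would force $2^{i}-2^{j}$ to lie in $S$, and with the specific $n$ chosen, one checks $2^{i}-2^{j} < n$ in each family.

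For step (ii), the ``local'' trades $2(2n+2^{i-1}) = 2n + (2n+2^{i})$ always hold and give Betti elements $2(2n+2^{i-1})$ with two $\mR$-classes, exactly as in Lemma \ref{Lem2}. The additional Betti elements arise from the modular arithmetic $2^{k} \bmod n$: for family (1), $n = 2^{x}$ makes $2(2n+2^{x-1}) = 5n$ a Betti element whose two factorizations differ in length by $3$; for family (2), $n = 2^{x-1}-1$ yields $2 \equiv 2^{x-1}/(2^{x-2})$ type congruences forcing an $x$ in the delta set; family (3) exhibits both an $x$ and an $x+1$ jump because $2^{x+1} \equiv 3 \pmod{2^{x+1}-3}$, producing factorizations with length gaps $x$ and $x+1$ at a single ``wrap-around'' Betti element. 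In each case I would exhibit the explicit two (or more) factorizations of each Betti element, parallel to equations (\ref{TwoFactorizations}) and (\ref{TwoFactorizationsB}).

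Step (iii) follows the template of Lemmas \ref{Lem4} and \ref{Lem5}: if any coordinate $A_{i}$ with $i \ge 1$ reaches $2$, trade it away to increase $A_{0}$, and if $A_{0}$ exceeds the threshold coming from the ``full wrap-around'' trade, replace it by the complementary factorization. Iterating this process drives every factorization into a single canonical normal form unless the element itself is one of the Betti elements listed. Step (iv) is then a finite computation: the analogue of the proof of Theorem \ref{TwoEl} rules out intermediate length differences by showing that any purported factorization pair realizing a forbidden delta value would force a factorization of length exactly $1$ longer (or shorter), contradicting the assumed gap.

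The main obstacle is family (4), whose delta set has the intricate mod-$4$ pattern $i_{j}-i_{j-1} \in \{1,2,3\}$ driven by $j \bmod 4$. Here the parameter $c$ governs how many distinct ``wrap-around'' Betti elements appear, and the length gaps at successive wrap-arounds encode the $+1,+2,+3$ increments. The delicate part is showing that no \emph{other} values of the form $x+i$ appear: one must rule out factorizations whose length differences would interpolate between the $i_{j}$'s. I would attempt this by establishing an explicit bijection between factorizations of a given Betti element and subsets of the exponents $\{0,1,\ldots,x+c-5\}$ constrained by a fixed residue modulo $n$, then reading off permitted length gaps from the base-$2$ representation. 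This is the step where I would expect computer-algebra evidence from \texttt{numericalsgps} to be indispensable in guessing the exact Betti set before proving it.
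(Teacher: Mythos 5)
This statement is a \emph{conjecture} in the paper: the authors provide no proof of it, only the remark (via the argument after Lemma \ref{Lem1}) that the displayed generating sets are minimal, and they explicitly attribute the delta-set formulas to computation. So there is no proof in the paper to compare against, and your proposal must stand on its own as a proof attempt. It does not: it is a research plan, not a proof. Steps (ii)--(iv) are precisely where all the difficulty lives, and none of them is executed. You never write down the claimed Betti elements for any of the four families, never enumerate their factorizations (the analogues of equations (\ref{TwoFactorizations}) and (\ref{TwoFactorizationsB})), and never prove that every non-Betti element has a single $\mR$-class. Your remark for family (2) --- ``$2 \equiv 2^{x-1}/(2^{x-2})$ type congruences forcing an $x$'' --- is not a checkable mathematical statement, and for family (4) you concede outright that you cannot identify the Betti set without computer experimentation. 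A proposal that defers the identification of the key objects to future computation has not proved the statement.

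The deeper gap is in step (iv). As the paper emphasizes, exhibiting an element $x$ with $k \in \Delta(x)$ is easy; the hard direction is showing $k \notin \Delta(S)$ for every $k$ outside the claimed set, which requires a global argument over all elements of $S$, not just the Betti elements. The proof of Theorem \ref{TwoEl} that you invoke as a template is a delicate case analysis engineered for the specific two-element set $\{p-1,(p-1)x\}$: it repeatedly uses that any intermediate length gap forces a trade of length exactly $p-1$, and this dichotomy collapses when the target delta set has three or more elements with gaps of different sizes (e.g.\ $\{1,2,3\}$, or the mod-$4$ pattern $i_j - i_{j-1} \in \{1,2,3\}$ of family (4)). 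Ruling out, say, a gap of $x-1$ or $x+2$ in family (3), or any value interpolating between consecutive $x+i_j$'s in family (4), requires new structural input that neither the paper nor your proposal supplies. Minimality of the generators (the one step you do correctly, and which the paper also records) is the trivial part; everything else remains open.
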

Using the same argument as in Lemma \ref{Lem1}, we see $S$ is minimal in each of these cases. Note that these semigroups are given by a construction very similar to the one from Theorem \ref{TwoEl}.  All of these semigroups are of the general form 
\[
S = \langle 2^x-c,2(2^x-c)+1,2(2^x-c)+2^{x+h} \rangle,
\]
where $c \ge 1$ and $h \ge 0$. Further generalizations give several other explicit classes of delta sets.

\begin{conj}
For any fixed $c, h \ge 0$ and for each $n\ge 2$ let  
\[
S_n = \langle 2^x-c,n(2^x-c)+1, 2^x-c,n(2^x-c)+2, \ldots,n(2^x-c)+2^{x+h}\rangle.
\]  
Suppose $\Delta(S_2) = \{1,c_0,c_1,\ldots,c_k \}$. Then $\Delta(S_n)$ is
\[ 
\{1,\ldots,n-1\} \bigcup \{(n-1)(c_0-1)+1,(n-1)(c_1-1)+1,\ldots,(n-1)(c_k-1)+1 \}.
\]

The last equation of Conjecture \ref{Con3} can be generalized as follows. For $c \ge 4,\ x \ge 2$, and $n\ge 2$, if 
\[
S = \langle 2^{x+c-3}-c,n(2^{x+c-3}-c)+1,\ldots,n(2^{x+c-3}-c)+2^{x+c-5}\rangle,
\] 
then 
\[ 
\Delta(S) = \{1,\ldots,n-1,(n-1)(x+i_0-1)+1,\ldots,(n-1)( x + i_{ \left \lfloor (c-1)/2 \right \rfloor} - 1)+1 \} 
\] 
with the integers $i_j$ defined as above.  
\end{conj}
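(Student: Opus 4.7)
The plan is to extend the techniques used in the proof of Theorem \ref{TwoEl} to arbitrary multipliers $n \ge 2$, by parameterizing factorizations of each element of $S_n$ and identifying how the combinatorial structure varies with $n$. Write $n_0 = 2^x - c$ and $n_i = n \cdot n_0 + 2^{i-1}$ for $i \ge 1$. The fundamental relation $2 n_i = n \cdot n_0 + n_{i+1}$ is a direct analogue of (\ref{TwoFactorizations}) and witnesses a length difference of $n - 1$ at the element $2 n_i$. Minimality of the given generating set follows from the same argument as in the $n=2$ case (Lemma \ref{Lem1}), since the digits $2^i - 2^j$ are strictly smaller than $n_0$ in the allowed range.

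Given a factorization $(b_0, b_1, \ldots)$ of $M \in S_n$, set $R = \sum_{i \ge 1} b_i 2^{i-1}$ and $T = \sum_{i \ge 1} b_i$, so that $M = (b_0 + n T) n_0 + R$ and the length equals $(M - R)/n_0 - (n-1) T$. The fundamental trade preserves both $M$ and $R$ while changing $T$ by $\pm 1$, so for each fixed residue $R$ the corresponding lengths form an arithmetic progression of common difference $n - 1$. The crucial structural observation is that the set of achievable $(R,T)$-pairs, together with the associated bounds $T_{\min}(R)$ and $T_{\max}(R)$, depends only on $n_0$ and on the digit set $\{2^0,\ldots, 2^{x+h}\}$; it does not depend on the multiplier $n$. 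Thus the residue structure of $S_n$ is isomorphic to that of $S_2$, and each residue-indexed AP of lengths in $S_n$ is obtained from the corresponding full integer interval of lengths in $S_2$ by dilating the step from $1$ to $n-1$.

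The $\supseteq$ direction of the conjecture then follows by direct computation on corresponding elements of $S_2$ and $S_n$. A between-residue gap of size $c_j$ in $S_2$ arises from consecutive residues $R_1 = R_2 + a \cdot n_0$ with $T_{\min}(R_1) - T_{\max}(R_2) = b$, where $a + b = c_j$; minimality of consecutive jumps forces $a = 1$ and $b = c_j - 1$. The same residue pair in $S_n$ produces a gap of $a + (n-1) b = 1 + (n-1)(c_j - 1)$, yielding the claimed set $\{(n-1)(c_j - 1) + 1\}$. The gaps $\{1, 2, \ldots, n-1\}$ arise from interleaving APs coming from several consecutive residues: since each residue-AP in $S_n$ has common difference $n - 1$, the possible residues' top-of-AP offsets modulo $n - 1$ realize each of $1, 2, \ldots, n - 1$ as a consecutive-length difference in the sorted union of progressions.

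The main obstacle is the $\subseteq$ direction, where one must show that no further gaps occur. This requires generalizations of Lemmas \ref{Lem4} and \ref{Lem5}: within each $\mR$-class, every factorization should be reachable via the fundamental trade from a canonical representative (for instance, the one with largest $b_0$ and $b_i < n$ for each $i \ge 1$), so the set of factorization lengths of $M$ decomposes cleanly into a union of residue-indexed APs with explicit endpoints determined by the $S_2$ data. Once this decomposition is rigorously established, $\Delta(S_n)$ can be read off directly from the interleaving of these APs and matches the conjectured formula. An essentially identical analysis, now tracking the specific exponents $i_j$ defined in Conjecture \ref{Con3}, handles the second part of the conjecture.
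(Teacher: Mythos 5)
This statement is one of the open conjectures in Section~4 of the paper: the authors state it on the basis of extensive computation and give no proof, so your proposal cannot be checked against a paper argument and must stand entirely on its own. It does not. The load-bearing step is your ``crucial structural observation'' that the achievable $(R,T)$-pairs and the bounds $T_{\min}(R)$, $T_{\max}(R)$ are independent of $n$, so that ``the residue structure of $S_n$ is isomorphic to that of $S_2$.'' As abstract digit data the pairs $(R,T)$ are indeed $n$-independent, but what matters for $\Delta(S_n)$ is which pairs occur among factorizations of a \emph{fixed} element $M$, and there the nonnegativity constraint $b_0 = (M-R)/n_0 - nT \ge 0$ caps $T$ at $\lfloor (M-R)/(n n_0)\rfloor$, which depends on $n$. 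Worse, there is no element-to-element correspondence under which ``corresponding'' even makes sense: if $(b_0,b_1,\ldots)$ and $(b_0',b_1',\ldots)$ are two factorizations of the same $M \in S_n$ with digit sums $T \neq T'$, then reinterpreting these same tuples with multiplier $2$ produces elements of $S_2$ differing by $(n-2)(T-T')n_0 \neq 0$, i.e.\ factorizations of \emph{different} elements of $S_2$. So the dictionary by which you want to transport sets of lengths from $S_2$ to $S_n$ (full interval of step $1$ mapping to an AP of step $n-1$) is never defined, let alone proved.

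Even granting such a transfer, both inclusions remain unproven. For $\supseteq$ you assert that the gaps $1,\ldots,n-1$ arise ``from interleaving APs'' without exhibiting elements that realize them, and your claim that a between-residue gap $c_j$ in $S_2$ forces $a=1$ and $b=c_j-1$ is stated, not argued. For $\subseteq$ --- which, as the paper itself emphasizes, is always the hard direction, since showing $k \notin \Delta(S)$ requires controlling \emph{all} factorizations of \emph{all} elements --- you explicitly defer to ``generalizations of Lemmas \ref{Lem4} and \ref{Lem5}'' that you neither state nor prove; note that those lemmas are tailored to the generators $2(p^x-2)+p^{i-1}$ and their proofs rest on the specific trades available there, so the generalization is precisely where the work lies. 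What you have is a plausible program (the setup, the relation $2n_i = n\cdot n_0 + n_{i+1}$, and the length formula $(M-R)/n_0 - (n-1)T$ are all correct), not a proof; the statement remains a conjecture after your argument.
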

As above, for the parameters given here the generating set of $S$ is minimal.

It seems likely that these conjectures can be generalized further.  For example, the formula above gives 
\[
\Delta(\langle 2^{x+1}-4,2(2^{x+1}-4)+1,\ldots,2(2^{x+1}-4)+2^{x-1} \rangle)=\{1,x,x+1\},
\] 
for $x \ge 2$.  Computation suggests that for $x\ge 7$, removing the last element gives 
\[
\Delta(\langle 2^{x-2}-4,2(2^{x-2}-4)+1,\ldots,2(2^{x-2}-4)+2^{x-5} \rangle) = \{1,2,4,5,x,x+1\}.
\]

We would like to understand whether every finite subset containing $1$ occurs as a delta set.  The only finite set containing $1$ with maximum element at most $5$ that we have not yet found is $\{1,3,4,5\}$.  We have also performed extensive computations in an attempt to find a numerical semigroup $S$ with $\Delta(S) = \{1,3,6\}$ but have not yet been successful.

We end this paper by describing two related realization problems.  We have focused so far on computing delta sets of numerical semigroups, a measure of the complexity of the structures of all of the sets of lengths of the infinite set of elements of the semigroup.  It is also interesting to ask finer questions about sets of lengths of individual elements of a semigroup.  
\begin{question}
\begin{enumerate}
\item Which finite sets occur as $\Delta(x)$ for some element $x$ in some numerical semigroup $S$?
\item Which finite sets occur as $\mathcal{L}(x)$ for some element $x$ in some numerical semigroup $S$?
\end{enumerate}
\end{question}

There are sets that have a positive answer to this first question that do not occur as $\Delta(S)$ for any semigroup $S$.  For example, in the semigroup $S = \langle 4,9,11 \rangle$ the element $36$ has set of lengths equal to $\{4,6,9\}$ and therefore has delta set equal to $\{2,3\}$.  Since it is not true that the minimum element of $\Delta(x)$ must be equal to the greatest common divisor of $\Delta(x)$ there are no obvious restrictions on sets that have a positive answer to this first question.  It is also not clear that every set that occurs as $\Delta(S)$ for some semigroup $S$ will also occur as $\Delta(x)$ for some individual element.

The second question was suggested by Alfred Geroldinger.  It is clear that an element of a semigroup $S$ has a factorization of length $1$ if and only if it is a minimal generator, and in that case there is a unique factorization of this element.  However, there are no obvious restrictions on sets not containing $1$ to have a positive answer to this second question.  Sets of lengths within a given semigroup are known to satisfy certain structural conditions.  Similar realizations questions for sets of lengths have been considered by Schmid in other settings \cite{Schmid}.  It is likely that Geroldinger's structure theorem for sets of lengths, \cite{G, GHK}, will be a useful starting place for studying these questions.

\section{Acknowledgments}
The authors thank Christopher O'Neill for sharing helpful programs for computing delta sets, and Pedro Garc\'ia-S\'anchez for alerting them to the reference \cite{GSLM}.  The second author thanks Alfred Geroldinger for helpful correspondence related to the final section of the paper.  The authors also thank Gilana Reiss and the Science Research Mentorship Program at Hunter College High School.  Finally, the authors thank the referee for several comments that helped improve the paper.


\begin{thebibliography}{00}

\bibitem{ACKT} D. F. Anderson, S. Chapman, N. Kaplan, and D. Torkornoo, \textit{An algorithm to compute $\omega$-primality in a numerical monoid}. Semigroup Forum 82 (2011), no. 1, 96-108.

\bibitem{AGS} A. Assi and P. A. Garc\'ia-S\'anchez, \textit{Constructing the set of complete intersection numerical semigroups with a given Frobenius number}. Appl. Algebra Engrg. Comm. Comput. 24 (2013), no. 2, 133-148.

\bibitem{BCRSS} P. Baginski, S. Chapman, R. Rodriguez, G. Schaeffer, and Y. She, \textit{On the delta set and catenary degree of Krull monoids with infinite cyclic divisor class group}. J. Pure Appl. Algebra 214 (2010), no. 8, 1334-1339.

\bibitem{BCS} P. Baginski, S. Chapman, and G. Schaeffer, \textit{On the delta set of a singular arithmetical congruence monoid}. J. Theor. Nombres Bordeaux 20 (2008), no. 1, 45-59.

\bibitem{BOP} T. Barron, C. O'Neill, and R. Pelayo, \textit{On dynamic algorithms for factorization invariants in numerical monoids}. To appear in Math. Comp. (2015). \\
 \url{http://arxiv.org/abs/1507.07435}

\bibitem{BCLMS} C. Bibby, S. Chapman, C. Leverson, A. Malyshev, and D. Steinberg, \textit{Determining delta sets of numerical monoids}. (2008), preprint.

\bibitem{BCKR} C. Bowles, S. Chapman, N. Kaplan, and D. Reiser, \textit{On delta sets of numerical monoids}, J. Algebra Appl. 5 (2006), 1-24.

\bibitem{BHJ} L. Bryant, J. Hamblin, and L. Jones, \textit{Maximal denumerant of a numerical semigroup with embedding dimension less than four}. J. Commut. Algebra 4 (2012), no. 4, 489-503.

\bibitem{CDHK} S. Chapman, J. Daigle, R. Hoyer, and N. Kaplan. \textit{Delta sets of numerical monoids using non-minimal sets of generators}. Comm. Algebra 38 (2010), no. 7, 2622-2634.

\bibitem{CGL} S. Chapman, P. A. Garc\'ia-S\'anchez, and D. Llena, \textit{The catenary and tame degree of numerical monoids}. Forum Math. 21 (2009), no. 1, 117-129.

\bibitem{CGP} S. Chapman, F. Gotti, and R. Pelayo, \textit{On delta sets and their realizable subsets in Krull monoids with cyclic class groups}. Colloq. Math. 137 (2014), no. 1, 137-146.

\bibitem{CHK} S. Chapman, R. Hoyer, and N. Kaplan, \textit{Delta sets of numerical monoids are eventually periodic}, Aequationes Math. 77 (2009), no. 3, 273-279.

\bibitem{Chap} S. Chapman, P. A. Garc\'ia-S\'anchez, D. Llena, A. Malyshev, and D. Steinberg, \textit{On the delta set and the betti elements of a BF-monoid}. Arab. J. Math. 1 (2012), no. 1, 53-61.

\bibitem{DMS} M. D'Anna, V. Micale, and A. Sammartino, \textit{Classes of complete intersection numerical semigroups}. Semigroup Forum 88 (2014), no. 2, 453-467.

\bibitem{GAP} M. Delgado, P. A. Garc\'ia-S\'anchez, and J. Morais, ``numericalsgps": a \texttt{gap} package on numerical semigroups. \\
\url{http://www.gap-system.org/Packages/numericalsgps.html}

\bibitem{GMV2} J. I. Garc\'ia-Garc\'ia, M. A. Moreno-Fr\'ias, and A. Vigneron-Tenorio, \textit{A computation of the $\omega$-primality and asymptotic $\omega$-primality with applications to numerical semigroups}. Israel J. Math. 206 (2015), no. 1, 395-411.

\bibitem{GMV} J. I. Garc\'ia-Garc\'ia, M. A. Moreno-Fr\'ias, and A. Vigneron-Tenorio, \textit{Computation of delta sets of numerical monoids}. (2015). Monatsh. Math. 178 (2015), no. 3, 457-472.

\bibitem{GSL} P. A. Garc\'ia-S\'anchez and M. J. Leamer, \textit{Huneke-Wiegand conjecture for  complete intersection numerical semigroup rings}. J. Algebra 391 (2013) 114-124.

\bibitem{GSLM} P. A. Garc\'ia-S\'anchez, D. Llena, and A. Moscariello, \textit{Delta sets for numerical semigroups with embedding dimension three}. (2015).\\ 
\url{http://arxiv.org/abs/1504.02116}

\bibitem{GSO} P. A. Garc\'ia-S\'anchez and I. Ojeda, \textit{Uniquely presented finitely generated commutative monoids}. Pacific J. Math. 248 (2010), no. 1, 91-105.

\bibitem{GSR} P. A. Garc\'ia-S\'anchez and J. Rosales, \textit{Numerical semigroups}. Developments in Mathematics, 20. Springer, New York 2009.

\bibitem{GSR2} P. A. Garc\'ia-S\'anchez and J. Rosales, \textit{Finitely generated commutative monoids}. Nova Science Publishers, Inc., Commack, NY, 1999. xiv+185 pp.

\bibitem{G} A. Geroldinger, \textit{A structure theorem for sets of lengths}. Colloq. Math. 78 (1998), no. 2, 225-259.

\bibitem{GHK} A. Geroldinger and F. Halter-Koch, \textit{Non-unique factorizations: Algebraic, combinatorial and analytic theory}, Pure and Applied Mathematics, 278, Chapman \& Hall/CRC, Boca Raton, 2006.

\bibitem{Omi} M. Omidali, \textit{The catenary and tame degree of numerical monoids generated by generalized arithmetic sequences}. Forum Math. 24 (2012), no. 3, 627-640.

\bibitem{Phil1} A. Philipp, \textit{A characterization of arithmetical invariants by the monoid of relations}. Semigroup Forum 81 (2010), no. 3, 424-434.

\bibitem{Phil2} A. Philipp, \textit{A characterization of arithmetical invariants by the monoid of relations II: the monotone catenary degree and applications to semigroup rings}. Semigroup Forum 90 (2015), no. 1, 220-250.

\bibitem{Schmid} W. Schmid, \textit{A realization theorem for sets of lengths}. J. Number Theory 129 (2009), no. 5, 990-999.

\end{thebibliography}
\end{document}